\newtheorem{thm}{Theorem}[section]
\newtheorem{lem}[thm]{Lemma}
\newtheorem{rem}{Remark}
\renewcommand{}
\newcommand{\vol}{\operatorname{vol}}
\newcommand{\bI}{\mathbb{I}}
\newcommand{\nn}{\nonumber}
\newcommand{\ttau}{\Delta t}
\newcommand{\mS}{\mathcal{S}}
\def\epsilon{\varepsilon} 
\newcommand{\mat}[1]{\boldsymbol{#1}}
\newcommand{\pol}{\vec}  
\begin{document}
\begin{frontmatter}
\title{
Structure-preserving parametric finite element methods for simulating axisymmetric solid-state dewetting problems with anisotropic surface energies
}

\author[1]{Meng Li}
\address[1]{School of Mathematics and Statistics, Zhengzhou University,
Zhengzhou 450001, China}
\ead{limeng@zzu.edu.cn}
\author[1]{Chunjie Zhou}

\begin{abstract}
Solid-state dewetting (SSD), a widespread phenomenon in solid-solid-vapor system, could be used to describe the accumulation of solid thin films on the substrate. 
In this work, we consider the sharp interface model for axisymmetric SSD with anisotropic surface energy. 
By introducing two types of surface energy matrices from the anisotropy functions,
we aim to design two structure-preserving algorithms for the axisymmetric SSD. The newly designed schemes are applicable to a broader range of anisotropy functions, and we can theoretically prove their volume conservation and energy stability. 
In addition, based on a novel weak formulation for the axisymmetric SSD, we further build another two numerical schemes that have good mesh properties. Finally, numerous numerical tests are reported to showcase the accuracy and efficiency of the numerical methods. 

\end{abstract} 

\begin{keyword} Solid-state dewetting, Anisotropy, Parametric finite element method, Axisymmetry,  Energy stability, Volume conservation
\end{keyword}

\end{frontmatter}

\setcounter{equation}{0}
\section{Introduction} \label{sec:intro}

At temperatures considerably lower than the material’s melting point, solid thin films on substrates tend to become unstable, prompting either dewetting or agglomeration, ultimately forming isolated islands.
Solid films remain in a solid state throughout their evolution, hence the term “solid-state dewetting (SSD)” is used to describe this process \cite{Thompson2012SolidStateDO,Leroy16}.
SSD is a widespread phenomenon in nature, primarily utilized in materials science and physics. It occurs within solid-solid-vapor systems, describing the process of solid thin films agglomerating on a substrate. The evolution of solid films under the influence of surface tension and capillary effects often showcases intricate characteristics. These include phenomena such as faceting \cite{Mccallum96, Jiran92,Ye10a}, edge retraction \cite{Wong00,Dornel06,hyun2013quantitative} caused by the reduction of surface curvature gradient, and fingering instabilities \cite{Kan05,Ye10b,Ye11a,Ye11b}. 

Recently, SSD finds extensive applications in numerous modern technologies. For instance, the SSD of thin films, posing an essential challenge in microelectronics processing, can be utilized to produce well-controlled patterns of micro-/nanoscale particle arrays.
These arrays find great applications in various fields such as sensors \cite{Mizsei93}, optical and magnetic devices \cite{Armelao06}, as well as catalysts for the growth of carbon and semiconductor nanowires \cite{Schmidt09}. 
The significant industrial applications and scientific inquiries surrounding SSD inspire many researchers to delve into understanding its underlying mechanisms, including both the experimental \cite{Amram12,Rabkin14,Herz216,Naffouti16,Naffouti17,Kovalenko17} and theoretical \cite{Wang15,Jiang16,Bao17,Bao17b,Zucker16} efforts.

In general, kinetic process of the evolution of films are governed by surface diffusion flow and contact line migration. Srolovitz and Safran \cite{Srolovitz86a} first proposed an isotropic sharp-interface model with small slope profile and cylindrical symmetry for simulating hole growth. 
The work was further developed to both the 2-dimensional \cite{Wong00} and 3-dimensional \cite{Du10} cases in Lagrangian representation.
Then, “marker particle” numerical method was designed by Wong et al \cite{Wong00} for solving nonlinear isotropic sharp-interface model without the assumption of small slope.
A phase-field model was designed by Jiang et al. \cite{Jiang12} to simulate the SSD of thin films with isotropic surface energy.
However, a significant number of experiments have demonstrated that crystalline anisotropy has a substantial impact on the kinetic evolution during SSD \cite{Thompson2012SolidStateDO,Leroy16}. Recently, many approaches have been proposed to study the effect of surface energy anisotropy, such as the discrete model by Dornel \cite{Dornel06}, the kinetic Monte Carlo model \cite{Dufay11,Pierre09a}, and the models by crystalline method \cite{hyun2013quantitative,Zucker13}. 
Additionally, a phase field approach for SSD problems with weakly anisotropic surface energy was studied in \cite{Wang15}. The method can inherently capture the intricate topology changes during evolution.
Moreover, comprehensive studies have been conducted on 2-dimensional SSD problems using sharp-interface models \cite{Jiang19a,Jiang19c,Zhao19}. In contrast to other approaches, these models are meticulously derived using the energetic variation method, enabling seamless integration of anisotropy and providing a fully mathematical representation. 
The governing equations for the SSD fall into a type of fourth-order (for weak anisotropy) or sixth-order (for strong anisotropy) geometric partial differential equations (PDEs) with prescribed boundary conditions at the two contact points.

Parametric finite element methods (PFEMs) have been widely regarded as highly effective approaches for solving geometric PDEs, with many advantages over other methods, such as weaker restriction on the time step and better mesh distribution, see the isotropic cases in \cite{Bansch05,bao2021structure,Barrett07,Barrett08JCP,kovacs2021convergent,Zhao20} and the general anisotropic cases in \cite{Bao17,baojcm2022,Barrett07Ani,Barrett08Ani,Hausser07,li2021energy,Zhao19b,Barrett20}. 
Among the PFEMs, the 'BGN' method (introduced by Barrett, Garcke and N{\"urnberg} in \cite{Barrett08JCP}) is regarded as an effective and prominent approach, as it allows for tangential degrees of freedom, ensuring excellent mesh quality. This also eliminates the need for the mesh regularization/smoothing procedures commonly required in numerous other methods. 
We refer to the review article \cite{Barrett20} for more thorough understanding on this idea. 
Very recently, an energy-stable PFEM for the surface diffusion flow and the SSD
with weakly anisotropic surface energy was proposed in \cite{li2021energy}. However, it has some relative complicated limitations on the anisotropic surface energy density $\gamma(\theta)$ with $\theta$ the angle between the outward unit normal vector and the vertical axis. Later, Bao et al. \cite{bao2023symmetrized,bao2023symmetrized1} constructed symmetrized energy-stable PFEMs for the 2- and 3-dimensional surface diffusion flows with symmetric surface energy density (related to the normal vector $\bf n_\mS$), i.e. $\gamma(-\bf n_\mS)=\gamma(\bf n_\mS)$. This method was also applied in the SSD problem, see \cite{li2023symmetrized}. 
In \cite{bao2024structure}, novel energy-stable PFEMs were proposed for some types of 2-dimensional anisotropic flows with a mild condition: $\gamma(-{\bf n_\mS})< 3\gamma({\bf n_\mS})$. In \cite{bao2023unified}, a unified structure-preserving PFEM for anisotropic surface diffusion in two dimensions and three dimensions was established under the condition $\gamma(-{\bf {n_\mS}})< (5-d)\gamma({\bf {n_\mS}})$.

In this work, we focus on the structure-preserving algorithms for the SSD with axisymmetric geometry. Indeed, 
if the evolving 3-dimensional surface has rotational symmetry structure, we can reduce the
geometric flows into the 1-dimensional simple problems. This treatment can significally minimize the computational complexity, avoid intricate mesh controls by dealing with the 1-dimensional generating curve, and maintain the axisymmetric property throughout the evolutionary process.
Zhao \cite{Zhao19} proposed a sharp-interface model for simulating SSD with axisymmetric geometry based on thermodynamic variation. Then a PFEM was proposed to solve above sharp-interface model. However, the numerical method is not structure-preserving, including both volume-conservative and energy-stable. 
We in this work review this system, and aim to establish its structure-preserving algorithms. Different from  
\cite{bao2023symmetrized,bao2023symmetrized1} and motivated by \cite{li2021energy}, we introduce two types of surface energy matrices with related to the variable $\theta$, and then build the equivalent systems of the sharp-interface model. Meanwhile, the surface energy matrices in this article are different from one in \cite{li2021energy}, as we add a stabilized term in each matrix in order 
to derive the energy stability of the PFEM. 
We also notice that for this special axisymmetric SSD problem in three dimensions,  the energy-stable condition $\gamma(\theta+\pi)< 2\gamma(\theta)$ can be weaken into $\gamma(\theta+\pi)< 3\gamma(\theta)$.

In summary, the primary objectives of this article include: (i) introduce two novel forms of surface energy matrices, and obtain the equivalent systems of the sharp-interface model; (ii) build two types of weak formulations and then establish three types of PFEMs with different properties, including structure-preserving approximation, linear approximation and volume-preserving approximation;  
(iii) present some numerical examples to test convergence rates, mesh quality, structure-preserving properties of the proposed PFEMs and investigate some new kinetic processes of SSD during its evolution process.

The rest of the paper is organized as follows. In Section \ref{SEC:MATHF}, we recall the sharp-interface model for SSD with axisymmetric geometry. In Section \ref{sec3}, we present a unified surface energy matrix and derive a novel variational formula, demonstrating the volume conservation and energy dissipation of the continuous model. In Section \ref{sec4}, structure-preserving PFEMs are proposed. In Section \ref{sec5}, we propose two novel PFEMs that enhance the quality of the mesh. In Section \ref{sec6}, a large number of numerical tests are conducted to demonstrate the validity of the proposed theory. Finally, we come to some conclusions in Section \ref{sec7}. 




\section{The sharp-interface model}\label{SEC:MATHF}
\setcounter{equation}{0}

In this section, we first review the SSD with axisymmetric geometry \cite{Zhao19}. As depicted in Fig. \ref{fig:det} (a), a toroidal thin island film is positioned on a flat and rigid substrate, with the generatrix illustrated in Fig. \ref{fig:det} (b) \cite{Zhao19}.
\begin{figure}[!htp]
\centering
\includegraphics[width=0.8\textwidth]{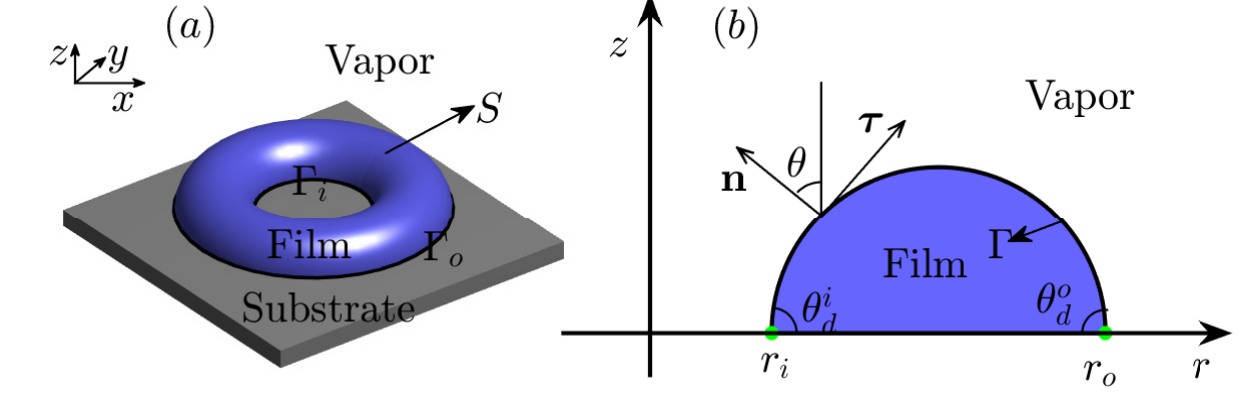}
\caption{A schematic description of the SSD: (a) a toroidal thin film on a flat substrate; (b) the cross-section of an axis-symmetric thin film in the cylindrical coordinate system $(r, z)$. 
Here, $r_i$ and $r_o$ represent the radius of the inner and outer contact lines, respectively.}
\label{fig:det}
\end{figure}

The thin film is characterized by an open surface $\mathcal{S}$, with its boundaries identified as two closed curves $\Gamma_i$ and $\Gamma_0$ situated on the substrate.
Since the graph enclosed by $\mS$ exhibits axisymmetry, we can parameterize the open surface $\mS$ as follows
\begin{equation}
(s,\varphi)\to \mS(s,\varphi):=\left(r(s)\cos\varphi,r(s)\sin\varphi,z(s)\right),
\end{equation}
where $r(s)$ is the radial distance, $\varphi$ represents azimuth angle, $z(s)$ is the film height, and $s$ $\in$ $[0, L]$ represents the arc length along the radial direction curve. $\mS(0,\cdot)$ and $\mS(L,\cdot)$ represent the inner contact line $\Gamma_i$ and outer contact line $\Gamma_0$, respectively.

The total interface energy for SSD problem can be written as 
\begin{equation}\label{eqn:energy}
W =\iint\limits_{\mS}\gamma_{FV} (\vec n)\,d\mS+\underbrace{(\gamma_{FS}-\gamma_{VS})A(\Gamma_o/\Gamma_i)}_{\text{Substrate\, energy}}\,,
\end{equation}
where $A(\Gamma_o/\Gamma_i)$ represents the surface area is surrounded by the two contact lines on the subtrate, $\gamma_{FS}$ and $\gamma_{VS}$ denote the surface energy densities of film/substrate and vapor/substrate respectively, and $\gamma_{FV}(\vec n)$ represents surface energy density of the film/vapor (surface $\mS$) with $\vec n$ the unit normal vector.

Since the cylindrical symmetry can reduce its dependence on the orientation of curve in the radial direction,  $\gamma(\theta)=\gamma_{FV}(\vec n)$ can represent the surface energy density of film/vapor satisfying 
\begin{equation}
\theta=\arctan\frac{z_s}{r_s};\qquad\gamma(\theta)=\gamma(-\theta),\quad\forall\theta\in [0,\pi];\qquad\gamma(\theta)\in C^2([0,\pi]),
\end{equation}
where subscript $s$ means the derivative of $s$.
We assume $r_o$ and $r_i$ represent the radius of outer contact line and inner contact lines of film/vapor on the substrate. To simplify, we let $r_o=r_0$, $r_i=r_L$. Consequently, the total interface energy can be simplified as:
\begin{equation}
W =\iint\limits_{\mS}\gamma (\theta)\,d\mS+\underbrace{(\gamma_{FS}-\gamma_{VS})(\pi r_0^2-\pi r_L^2)}_{\text{Substrate\, energy}}.
\end{equation}

We denote $\Gamma(t)$ by the generatrix of the open surface $\mathcal{S}$, given by $\vec{X}(s,t) = \left( r(s,t), z(s,t) \right)^\top$, with $r$ and $z$ representing the functions of the arc length $s$ and the time $t$. Then, 
the sharp interface model for SSD with anisotropic surface energy in three dimensions with cylindrical symmetry can be obtained as the following dimensionless form:
\begin{subequations}\label{eqn:model}
\begin{align}
&\partial_t \vec{X} =-\frac{1}{r}\partial_s(r\partial_s\mu)\vec{n},\quad 0<s<L(t),\quad t>0, \label{eqn:model_a}\\
&\mu = -\left[\gamma(\theta)+\gamma''(\theta)\right]\kappa+\frac{\gamma(\theta)\partial_s z + \gamma'(\theta)\partial_s r}
{r}, \label{eqn:model_b}\\
&\kappa =-(\partial_{ss}\vec{X})\cdot\vec{n},\qquad\vec{n}=-(\partial_s\vec{X})^\perp,
\label{eqn:model_c}
\end{align}
\end{subequations}
where $\mu$ represents 
the chemical potential, $\kappa$ denotes the curvature of the open curve $\Gamma(t)$, and $L(t)$ is 
the total arc length of the open curve $\Gamma(t)$. 
The initial curve is given as 
\begin{equation}\label{eqn:initial}	
\vec{X} (s,0):=\vec{X}_0 (s)=(r(s,0), z(s,0))^\top=(r_0 (s),z_0 (s))^\top,\qquad0\le s\le L_0:=L(0).
\end{equation}
The governing equation mentioned satisfies the specified boundary conditions:
\begin{itemize}
\item [(i)] contact line condition
\begin{equation}
\label{eqn:BC1}
z(L,t)=0,\qquad \begin{cases} 
z(0,t)=0, & \text{if}~~r(0,t)>0, \\
\partial_s z(0,t)=0, & \text{otherwise}, 
\end{cases}\qquad t\ge0;
\end{equation}
\item [(ii)] relaxed contact angle condition
\begin{equation}
\label{eqn:BC2}
\partial_t r(L,t)=-\eta f(\theta_d ^0;\sigma),\qquad \begin{cases} 
\partial_t r(L,t)=-\eta f(\theta_d ^i;\sigma), & \text{if}~~ r(0,t)>0, \\
r(0,t)=0, & \text{otherwise}, 
\end{cases}\qquad t\ge0;
\end{equation}
\item [(iii)] zero-mass flux condition 
\begin{equation}
\label{eqn:BC3}
\partial_s\mu (0,t)=0, \qquad \partial_s\mu(L,t)=0, \qquad t\ge0,
\end{equation}
\end{itemize}
where $\theta_d ^0$ and $\theta_d ^i$ 
 denote the angles at the right and left contact lines respectively, $\eta\in(0,\infty)$ is the contact line mobility, and $f(\theta;\sigma)$ is defined as follows:
\begin{equation}\label{eqn:f}
f(\theta;\sigma)=\gamma(\theta)\cos(\theta)-\gamma'(\theta)\sin(\theta)-\sigma, \quad\theta\in[-\pi,\pi], \quad\sigma=\frac{\gamma_{VS}-\gamma_{FS}}{\gamma_0}. 
\end{equation}
\begin{rem}\label{rem:boundary}
The contact line condition (i) guarantees the continuous movement of contact lines along the substrate.  The relaxed contact angle condition (ii) permits the adjustment of the contact angle, while the zero-mass flux condition (iii) maintains the conservation of the total volume/mass of the thin film.  
\end{rem}

Define $\text{vol}(\vec{X}(t))$ as the volume enclosed between the surface $\mathcal{S}$ and the substrate, and let $W(t)$ be the total free energy. By using the surface integral calculation, we have 
\begin{equation}
\vol(\vec{X}(t))=2\pi\int_{0}^{L(t)} rzr_s \,ds, \quad
W(t)=2\pi\int_{0}^{L(t)} r\,\gamma(\theta)\,\left | \partial_s\vec X \right |ds - \sigma\pi(r_0 ^2-r_L ^2). 
\end{equation}
From \cite{Zhao19}, the following volume conservation and energy decay properties hold 
\begin{align}
  \vol(\vec{X}(t))\equiv \vol(\vec{X}(0)); 
  \qquad W(t_2)\leq W(t_1)\leq W(0),
  \qquad t_2\geq t_1\geq 0.
\end{align}

\section{Variational formulations and properties}\label{sec3}
In this section, we will introduce a  variational formulation of the model \eqref{eqn:model}, and demonstrate its volume-conservation and energy-dissipation properties. 
We first introduce the time-independent variable $\rho\in\mathbb{I}=[0,1]$ utilized to parameterize the open curve $\vec{X}(t)$ as follows 
\begin{equation}\label{qx}
\Gamma(t)=\vec{X}(\rho,t)=\left( r(s,t), z(s,t) \right)^\top:\mathbb{I}\times[0,T]\to\mathbb{R}^2. 
\end{equation}
Due to this parametrization, we can obtain the relationship between $s$ and $\rho$ as $s(\rho,t)=\int_{0}^{\rho} | \partial_\rho\vec{X} |d\rho $. Furthermore, we can also obtain $\partial_\rho s=| \partial_\rho\vec{X}  |$ and $ds=\partial_\rho s d\rho= | \partial_\rho\vec{X}  |d\rho$. 

Next, we define the functional space on the domain $\mathbb{I}$ as 
\begin{equation*}
L^2(\mathbb{I}):=\left \{ u:\mathbb{I}\to\mathbb{R}\,\bigg|\,\int\limits_{\Gamma(t)}\left | u(s) \right |^2ds=\int\limits_\mathbb{I}\left | u(s(\rho,t)) \right |^2\partial_\rho s\,d\rho<+\infty \right \},  
\end{equation*}
equipped with the $L^2$-inner product 
\begin{equation*}
(u,v):=\int\limits_{\Gamma(t)}u(s)\,v(s)ds=\int\limits_{\mathbb{I}}u(s(\rho,t))\,v(s(\rho,t))\,\partial_\rho s\,d\rho, \quad\forall u,v\in L^2(\mathbb{I}). 
\end{equation*}
We can directly extend the above inner product to $[L^2(\mathbb{I})]^2$. We further define the Sobolev spaces 
\begin{align*}
&H^1(\mathbb{I}):=\left \{ u:\mathbb{I}\to\mathbb{R},u\in L^2(\mathbb{I})\nn\
\text{and}\ \partial_\rho u \in L^2(\mathbb{I}) \right \}, \\
&H_0^1(\mathbb{I}):=\left \{ u:\mathbb{I}\to\mathbb{R},u\in H^1(\mathbb{I})\nn\
\text{and}\ u(0)=u(1)=0 \right \}, \quad\mathbb{X}:=H^1(\mathbb{I})\times H_0^1(\mathbb{I}).
\end{align*}

We introduce a matrix $\mat{B}_q(\theta)$ related to $\theta$,
\begin{equation}\label{Matrix:Bqx}
\mat{B}_{q}(\theta)=
\begin{pmatrix}
\gamma(\theta)&-\gamma'(\theta) \\
\gamma'(\theta)&\gamma(\theta)
\end{pmatrix}
\begin{pmatrix}
\cos2\theta&\sin2\theta \\
\sin2\theta&-\cos2\theta
\end{pmatrix}^{1-q}+
\mathscr S(\theta)\left[\frac{1}{2}\mat{I}-\frac{1}{2}\begin{pmatrix}
\cos2\theta&\sin2\theta \\
\sin2\theta&-\cos2\theta
\end{pmatrix}\right], 
\end{equation}
where the variable $q$ takes values of 0 and 1, $\mat I$ is a $2\times 2$ identity matrix, and $\mathscr S(\theta)$ is a stability function.  
In this work, the following two cases will be considered:
    \begin{itemize}
        \item [\textbf{(1)}] In the case of $q=0$, we have $\mat{B}_0(\theta)=\mat{B}_0(\theta)^\top$, i.e., the matrix is symmetric. In order to 
   ensure the positive definiteness of $\mat{B}_0(\theta)$, it requires $\gamma(\theta)$ to satisfy $\gamma(\theta)=\gamma(\pi+\theta)$.
        \item [\textbf{(2)}] In the case of $q=1$, the matrix is not symmetric, and it can be split into the symmetric positive matrix $\overline{\mat{B}}_1(\theta)$ and anti-symmetric matrix $\underline{\mat{B}}_1(\theta)$:
        \begin{align*}
       \overline{\mat{B}}_1(\theta)    =\begin{pmatrix}
\gamma(\theta)&0 \\
0&\gamma(\theta)
\end{pmatrix}+
\mathscr S(\theta)\begin{pmatrix}
\frac{1-\cos2\theta}{2}&-\frac{1}{2}\sin2\theta \\
-\frac{1}{2}\sin2\theta&\frac{1+\cos2\theta}{2}
\end{pmatrix},
\qquad 
\underline{\mat{B}}_1(\theta)    =\begin{pmatrix}
0&-\gamma'(\theta) \\
\gamma'(\theta)&0
\end{pmatrix}.
        \end{align*}
To prove the energy stability of the numerical scheme, we also assume  
 $3\gamma(\theta)> \gamma(\pi+\theta)$ that will be introduced later. 
    \end{itemize}

We denote $\vec\tau$ as the tangent vector of the open curve, and $\vec{n}$ as its unit normal vector. Then, there hold
\begin{equation}\label{tangent}
\vec\tau=\partial_s\vec{X}=(\cos\theta,\sin\theta)^\top,\quad \vec{n}=-\vec{\tau}^\bot, \quad \vec n_s=-\partial_{ss}\vec{X}^\bot, \quad 
\partial_s\theta=(\sin^2\theta+\cos^2\theta)\partial_s\theta=\partial_{ss}\vec X\cdot\vec n.
\end{equation}
  Utilizing the matrix $\mat{B}_q(\theta)$ as defined in  \eqref{Matrix:Bqx}, we can obtain the following lemma. 
\begin{lem}\label{lem:equivalent}
 With the matrix $\mat{B}_q(\theta)$, the equation \eqref{eqn:model_b} can be written as 
\begin{equation}
\label{eqn:equiv}
r\mu\vec{n}=\partial_s\left[r\mat{B}_q(\theta)\partial_s\vec{X}\right]-\gamma(\theta)\vec e_1,
\qquad \text{with}\quad \vec e_1=(1, 0)^\top.
\end{equation} 
\end{lem}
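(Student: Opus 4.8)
The plan is to verify \eqref{eqn:equiv} by a short direct computation in the moving frame $\{\vec\tau,\vec n\}$ of $\Gamma(t)$, preceded by a reduction that removes the stability function $\mathscr S(\theta)$ and the index $q$ from the identity altogether. First I would rewrite \eqref{eqn:model_b} in this frame. Since $\partial_s\vec X=\vec\tau=(\cos\theta,\sin\theta)^\top$ we have $\partial_s r=\cos\theta$, $\partial_s z=\sin\theta$; and from \eqref{tangent} together with $\kappa=-(\partial_{ss}\vec X)\cdot\vec n$ one gets $\kappa=-\partial_s\theta$, $\partial_s\vec\tau=\partial_{ss}\vec X=(\partial_s\theta)\,\vec n$, $\partial_s\vec n=-(\partial_s\theta)\,\vec\tau$, and $\vec e_1=\cos\theta\,\vec\tau-\sin\theta\,\vec n$. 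Multiplying \eqref{eqn:model_b} by $r$ and substituting $\kappa=-\partial_s\theta$ gives $r\mu=\gamma(\theta)\sin\theta+\gamma'(\theta)\cos\theta+r\,[\gamma(\theta)+\gamma''(\theta)]\,\partial_s\theta$, so the claim \eqref{eqn:equiv} becomes equivalent to
\begin{equation*}
\partial_s\!\left[r\,\mat{B}_q(\theta)\,\partial_s\vec X\right]-\gamma(\theta)\,\vec e_1=\Bigl(\gamma(\theta)\sin\theta+\gamma'(\theta)\cos\theta+r\,[\gamma(\theta)+\gamma''(\theta)]\,\partial_s\theta\Bigr)\vec n .
\end{equation*}

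The key observation is that for both $q=0$ and $q=1$ one has $\mat{B}_q(\theta)\,\partial_s\vec X=\gamma(\theta)\,\partial_s\vec X+\gamma'(\theta)\,\vec n$. To see this, write $\mat{R}(\theta)=\left(\begin{smallmatrix}\cos2\theta&\sin2\theta\\ \sin2\theta&-\cos2\theta\end{smallmatrix}\right)$ for the reflection matrix appearing in \eqref{Matrix:Bqx}. A one-line check shows $\mat{R}(\theta)\vec\tau=\vec\tau$ and $\mat{R}(\theta)\vec n=-\vec n$; hence $\mat{R}(\theta)^{1-q}\vec\tau=\vec\tau$ for $q\in\{0,1\}$, so the first summand of $\mat{B}_q(\theta)$ sends $\vec\tau$ to $\left(\begin{smallmatrix}\gamma&-\gamma'\\ \gamma'&\gamma\end{smallmatrix}\right)\vec\tau=\gamma\vec\tau+\gamma'\vec n$. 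Moreover $\tfrac12\mat{I}-\tfrac12\mat{R}(\theta)$ is the orthogonal projection onto $\spa\{\vec n\}$, so the stabilizing summand $\mathscr S(\theta)\bigl[\tfrac12\mat{I}-\tfrac12\mat{R}(\theta)\bigr]$ kills $\vec\tau$. Consequently $r\,\mat{B}_q(\theta)\,\partial_s\vec X=r\gamma(\theta)\,\vec\tau+r\gamma'(\theta)\,\vec n$, independently of $q$ and of $\mathscr S$, and it remains only to prove
\begin{equation*}
\partial_s\!\left[r\gamma(\theta)\,\vec\tau+r\gamma'(\theta)\,\vec n\right]-\gamma(\theta)\,\vec e_1=\Bigl(\gamma(\theta)\sin\theta+\gamma'(\theta)\cos\theta+r\,[\gamma(\theta)+\gamma''(\theta)]\,\partial_s\theta\Bigr)\vec n .
\end{equation*}

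The remaining step is a one-line application of the product rule. Writing out $\partial_s[r\gamma(\theta)\vec\tau]$ and $\partial_s[r\gamma'(\theta)\vec n]$ with $\partial_s r=\cos\theta$, $\partial_s[\gamma(\theta)]=\gamma'(\theta)\,\partial_s\theta$, $\partial_s\vec\tau=(\partial_s\theta)\,\vec n$ and $\partial_s\vec n=-(\partial_s\theta)\,\vec\tau$, then adding them and subtracting $\gamma\,\vec e_1=\gamma\cos\theta\,\vec\tau-\gamma\sin\theta\,\vec n$, the $\vec\tau$-components cancel ($\gamma\cos\theta+r\gamma'\partial_s\theta-r\gamma'\partial_s\theta-\gamma\cos\theta=0$) while the $\vec n$-component collects to $\gamma'\cos\theta+\gamma\sin\theta+r[\gamma+\gamma'']\,\partial_s\theta$, which is exactly $r\mu$. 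This establishes \eqref{eqn:equiv} for $q=0$ and $q=1$ simultaneously.

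There is no deep obstacle: the argument is elementary once the reduction in the second step is available. The only points requiring care are (i) identifying $\mat{R}(\theta)$ as the reflection that fixes $\vec\tau$ and reverses $\vec n$ --- this is precisely what trivializes both the $q$-dependence and the $\mathscr S(\theta)$-term --- and (ii) keeping the signs consistent in $\kappa=-\partial_s\theta$, $\partial_s\vec n=-(\partial_s\theta)\vec\tau$ and $\vec e_1=\cos\theta\,\vec\tau-\sin\theta\,\vec n$, all of which follow from the conventions in \eqref{eqn:model_c}--\eqref{tangent}. If instead one expanded $\partial_\theta\mat{B}_q(\theta)$ in full, one would additionally have to track the $\mathscr S'(\theta)$ terms and the contribution of $\partial_\theta\mat{R}(\theta)\,\vec\tau=2\vec n$ and check their cancellation; the reduction above avoids this bookkeeping entirely.
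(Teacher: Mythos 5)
Your proof is correct and follows essentially the same route as the paper's: the central step in both is the observation that $\mat{B}_q(\theta)\partial_s\vec X=\gamma(\theta)\vec\tau+\gamma'(\theta)\vec n$ for $q\in\{0,1\}$ (the reflection fixes $\vec\tau$ and the stabilizing projection annihilates it), after which the identity follows from the Frenet relations and the decomposition $\vec e_1=(\vec\tau\cdot\vec e_1)\vec\tau+(\vec n\cdot\vec e_1)\vec n$. Your sign conventions ($\kappa=-\partial_s\theta$, $\vec n=(-\sin\theta,\cos\theta)^\top$) match those implicit in \eqref{tangent} and \eqref{eqn:equiv_pf3}, so the frame computation is sound.
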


\begin{proof}
From \eqref{qx}  and \eqref{tangent},  we can easily obtain 
\begin{equation}\label{eqn:equiv_pf1}
z=\vec{X}\cdot\vec e_2, \quad r=\vec{X}\cdot\vec e_1, \quad \partial_s z =\partial_s\vec{X}\cdot\vec e_2, \quad 
\partial_s r =\partial_s \vec{X}\cdot\vec e_1, 
\end{equation}
where $e_2=(0,1)^\top$. 
By substituting equations \eqref{eqn:equiv_pf1} and \eqref{eqn:model_c} into \eqref{eqn:model_b}, and utilizing $\gamma(\theta)=-\vec\xi\cdot\vec\tau$, we have 
\begin{align}
\label{eqn:equiv_pf2}
r[\gamma(\theta)+\gamma''(\theta)](\partial_{ss}\vec X\cdot\vec n)\vec n&=r\left[\gamma(\theta)(\partial_{ss}\vec X\cdot\vec n )\vec n+ 
\gamma''(\theta)(\partial_{ss}\vec X\cdot\vec n )\vec n+\gamma'(\theta)(\partial_{ss}\vec X\cdot\vec n)(\vec\tau\cdot\vec n)\vec n +\gamma'(\theta)(\partial_{ss}\vec X\cdot\vec n)(\vec\tau\cdot\vec n)\vec n\right] \nonumber\\
&=r\left[\gamma(\theta)\vec\tau_s + \gamma''(\theta)\partial_s\theta \vec n+\gamma'(\theta)\partial_s\theta \vec\tau+\gamma'(\theta)\partial_s\vec n\right] \nonumber\\
&=r\partial_s\left[\gamma(\theta)\vec\tau +\gamma'(\theta)\vec n\right] \nonumber \\
&=\partial_s\left[r\gamma(\theta)\vec\tau +r\gamma'(\theta)\vec n\right]-\gamma(\theta)(\vec\tau\cdot\vec e_1)\vec\tau-(\vec\tau\cdot\vec e_1)\gamma'(\theta)\vec n .
\end{align}
From the definition of $\mat{B}_{q}(\theta)$, it holds 
\begin{align}
\label{eqn:equiv_pf3}
\mat{B}_{q}(\theta)\partial_s\vec X&=
\begin{pmatrix}
\gamma(\theta)&-\gamma'(\theta) \\
\gamma'(\theta)&\gamma(\theta)
\end{pmatrix}
\begin{pmatrix}
\cos2\theta&\sin2\theta \\
\sin2\theta&-\cos2\theta
\end{pmatrix}^{1-q}\binom{\cos\theta}{\sin\theta}+
\mathscr S(\theta)\begin{pmatrix}
\frac{1-\cos2\theta}{2}&-\frac{1}{2}\sin2\theta \\
-\frac{1}{2}\sin2\theta&\frac{1+\cos2\theta}{2}
\end{pmatrix}\binom{\cos\theta}{\sin\theta} \nonumber \\
&=\begin{pmatrix}
\gamma(\theta)&-\gamma'(\theta) \\
\gamma'(\theta)&\gamma(\theta)
\end{pmatrix}\binom{\cos\theta}{\sin\theta} =\gamma(\theta)\binom{\cos\theta}{\sin\theta} +\gamma'(\theta)\binom{-\sin\theta}{\cos\theta}\nonumber \\
&
=\gamma(\theta)\vec\tau+\gamma'(\theta)\vec n.
\end{align}
Hence, by using \eqref{eqn:equiv_pf2} and \eqref{eqn:equiv_pf3}, it follows that  
\begin{align}
\label{eqn:equiv_pf4}
r[\gamma(\theta)+\gamma''(\theta)](\partial_{ss}\vec X\cdot\vec n)\vec n
=\partial_s\left[r\mat{B}_{q}(\theta)\partial_s\vec X\right]-\gamma(\theta)(\vec\tau\cdot\vec e_1)\vec\tau-(\vec\tau\cdot\vec e_1)\gamma'(\theta)\vec n.
\end{align}
In addition, we have 
\begin{align}
\label{eqn:equiv_pf5}
[\gamma(\theta)\partial_s z + \gamma'(\theta)\partial_s r]\vec n&=\left[\gamma(\theta)\partial_s\vec X\cdot\vec e_2+\gamma'(\theta)(\partial_s \vec X\cdot\vec e_1)\right]\vec n \nonumber \\
&=[\gamma(\theta)\partial_s\vec X^\bot\cdot\vec e_1 + \gamma'(\theta)(\partial_s\vec X\cdot\vec e_1)]\vec n \nonumber \\
&=-\gamma(\theta)(\vec n\cdot\vec e_1)\vec n + (\vec\tau\cdot\vec e_1)\gamma'(\theta)\vec n. 
\end{align}
From \eqref{eqn:model_b}, \eqref{eqn:equiv_pf4}, \eqref{eqn:equiv_pf5} and the decomposition $\vec e_1 = (\vec\tau\cdot\vec e_1)\vec\tau + (\vec n\cdot\vec e_1)\vec n$, we finally obtain \eqref{eqn:equiv}. 
\end{proof}
Selecting a test function $\varphi\in H^1(\mathbb{I})$, multiplying $r\varphi\vec n$ to \eqref{eqn:model_a}, integrating over $\Gamma(t)$, and noting  \eqref{eqn:BC3}, we have 
\begin{align}\label{eqn:varf_1}
\int\limits_{\Gamma(t)} r\partial_t\vec X\cdot\vec n\varphi ds&=\int\limits_{\Gamma(t)}-\partial_s(r\partial_s\mu)\varphi ds \nonumber \\
&=\int\limits_{\Gamma(t)}r\partial_s\mu\,\partial_s\varphi ds-\left(r\partial_s\mu\,\varphi\right)\bigg|_{s=0} ^{s=L} \nonumber \\
&=\int\limits_{\Gamma(t)}r\partial_s\mu\,\partial_s\varphi ds.
\end{align}
Then, multiplying $\vec\psi=(\psi_1,\psi_2)^\top\in\mathbb{X}$ to \eqref{eqn:equiv}, integrating it over $\bI$, using integrating by part, and thanks to the boundary conditions \eqref{eqn:BC2} and \eqref{eqn:f}, we obtain
\begin{align}\label{eqn:varf_2}
\int\limits_{\Gamma(t)}r\mu\vec n\cdot\vec \psi ds&=\int\limits_{\Gamma(t)}\partial_s\left[r\mat{B}_{q}(\theta)\partial_s\vec X\right]\cdot\vec\psi ds-\int\limits_{\Gamma(t)}\gamma(\theta)\psi_1 ds \nonumber \\
&=-\int\limits_{\Gamma(t)}\left[r\mat{B}_{q}(\theta)\partial_s\vec X\right]\cdot\partial_s\vec\psi ds + \left[r\mat{B}_{q}(\theta)\partial_s\vec X\right]\cdot\vec\psi\bigg|_{s=0} ^{s=L}-\int\limits_{\Gamma(t)}\gamma(\theta)\psi_1 ds \nonumber \\
&=-\int\limits_{\Gamma(t)}\left[r\mat{B}_{q}(\theta)\partial_s\vec X\right]\cdot\partial_s\vec\psi ds - 
\int\limits_{\Gamma(t)}\gamma(\theta)\psi_1 ds + 
r\begin{pmatrix}
\gamma(\theta)&-\gamma'(\theta) \\
\gamma'(\theta)&\gamma(\theta)
\end{pmatrix}\binom{\cos\theta}{\sin\theta}\cdot\binom{\psi_1}{\psi_2}\bigg|_{s=0} ^{s=L} \nonumber \\
&=-\int\limits_{\Gamma(t)}\left[r\mat{B}_{q}(\theta)\partial_s\vec X\right]\cdot\partial_s\vec\psi ds -\int\limits_{\Gamma(t)}\gamma(\theta)\psi_1 ds\nonumber \\
&~~~~- 
\frac{1}{\eta }\bigg[r_L\partial_t r(L,t)\psi_1(1)+r_0 \partial_t r(0,t)\psi_1(0)\bigg]+\sigma\bigg[r_L\psi_1(1)-r_0\psi_1(0)\bigg].
\end{align}

For convenience, we define $\langle\,\cdot\,\rangle$ by the $L^2$-inner product over $\bI$. 
Combining \eqref{eqn:varf_1} , \eqref{eqn:varf_2} and $ds=\partial_\rho s d\rho= | \partial_\rho\vec{X}  |d\rho$, we can obtain a new variational formulation of SSD that is different from ones in \cite{Zhao17,baojcm2022}. 
Suppose $\Gamma(0):=\vec X(\cdot, 0)\in\mathbb{X}$, to find open curves $\Gamma(t):=\vec X(\cdot,t)\in\mathbb{X}$, and $\mu(\cdot,t)\in H^1(\mathbb{I})$, such that
\begin{subequations}\label{eqn:vaf}
\begin{align}
&\left\langle r\partial_t\vec X\cdot\vec n,  
 \varphi\left | \partial_\rho\vec X \right |\right\rangle-\left\langle r\partial_\rho\mu, \partial_\rho\varphi\left | \partial_\rho\vec X \right |^{-1}\right\rangle=0, \quad\forall\varphi\in H^1(\mathbb{I}), \label{eqn:vaf_1} \\
&\left\langle r\mu\vec n, \vec \psi\left | \partial_\rho\vec X \right |\right\rangle + 
\left\langle r\mat{B}_{q}(\theta)\partial_\rho\vec X,  
 \partial_\rho\vec\psi\left | \partial_\rho\vec X \right |^{-1}\right\rangle+
\left\langle\gamma(\theta), \psi_1 \left | \partial_\rho\vec X \right |\right\rangle \nonumber \\
&~~~~+\frac{1}{\eta }\bigg[r_L\partial_t r(L, t)\psi_1(1)+r_0 \partial_t r(0,  t)\psi_1(0)\bigg]-\sigma\bigg[r_L\psi_1(1)-r_0\psi_1(0)\bigg]=0, \quad
\forall\vec\psi=(\psi_1, \psi_2)^\top\in\mathbb{X} \label{eqn:vaf_2}.
\end{align}
\end{subequations}
We can prove that the variational formulation \eqref{eqn:vf} holds the volume conservation and energy stability. 
\begin{thm}\label{lem:structure}(Volume conservation $\&$ energy stability). Assume $(\vec X(\cdot, t),\mu(\cdot, t))\in\mathbb{X}\times H^1(\mathbb{I})$ is a solution of variational formulation \label{eqn:vf}. Then there hold
\begin{equation}
\label{eqn:structure_vf}
\vol(\vec X(t))\equiv \vol(\vec X(0)), \qquad
W(t)\le W(t_1)\le W(0), \qquad t\ge t_1\ge 0,
\end{equation}
i.e., volume conservation and energy stability. 
\end{thm}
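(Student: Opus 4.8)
The plan is to differentiate $\vol(\vec X(t))$ and $W(t)$ in time along a solution and to insert suitable test functions into \eqref{eqn:vaf_1}--\eqref{eqn:vaf_2}. For the volume, I would first establish the transport identity
\begin{equation*}
\frac{d}{dt}\vol(\vec X(t)) = 2\pi\left\langle r\,\partial_t\vec X\cdot\vec n,\;|\partial_\rho\vec X|\right\rangle ,
\end{equation*}
by writing $\vol(\vec X(t))=2\pi\int_{\bI} r\,z\,\partial_\rho r\,d\rho$, differentiating in $t$, integrating the $\partial_\rho\partial_t r$ term by parts (the boundary contribution drops since $z(\cdot,t)\in H^1_0(\bI)$ throughout, so $z=0$ at both endpoints), and rewriting the remaining integrand via $\partial_t r=\vec e_1\cdot\partial_t\vec X$, $\partial_t z=\vec e_2\cdot\partial_t\vec X$, $|\partial_\rho\vec X|\,\vec n=-(\partial_\rho\vec X)^\perp$. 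Then taking $\varphi\equiv 1$ in \eqref{eqn:vaf_1} makes its right-hand side vanish because $\partial_\rho 1=0$, hence $\frac{d}{dt}\vol(\vec X(t))=0$, which is the first assertion.

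For the energy, the central ingredient is a first-variation identity for the anisotropic weighted length $\mathcal E(t):=\int_{\bI} r\,\gamma(\theta)\,|\partial_\rho\vec X|\,d\rho$. Differentiating $r\,\gamma(\theta)\,|\partial_\rho\vec X|$ in $t$ and using $\partial_t|\partial_\rho\vec X|=\vec\tau\cdot\partial_\rho\partial_t\vec X$, $(\partial_t\theta)\,|\partial_\rho\vec X|=\vec n\cdot\partial_\rho\partial_t\vec X$, $\partial_t r=\vec e_1\cdot\partial_t\vec X$, together with the algebraic identity $\mat{B}_q(\theta)\partial_s\vec X=\gamma(\theta)\vec\tau+\gamma'(\theta)\vec n$ already verified in \eqref{eqn:equiv_pf3}, I would obtain, with no extra boundary terms,
\begin{equation*}
\frac{d\mathcal E}{dt}=\left\langle r\,\mat{B}_q(\theta)\partial_\rho\vec X,\;\partial_\rho\partial_t\vec X\,|\partial_\rho\vec X|^{-1}\right\rangle+\left\langle\gamma(\theta),\;\partial_t r\,|\partial_\rho\vec X|\right\rangle .
\end{equation*}
Since $\vec X(\cdot,t)\in\mathbb{X}$ forces $z(\cdot,t)\in H^1_0(\bI)$, we have $\partial_t\vec X\in\mathbb{X}$, so $\partial_t\vec X$ is an admissible test function in \eqref{eqn:vaf_2}.

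Next I would take $\vec\psi=\partial_t\vec X$ in \eqref{eqn:vaf_2} and $\varphi=\mu$ in \eqref{eqn:vaf_1}, and use $\left\langle r\mu\vec n,\partial_t\vec X\,|\partial_\rho\vec X|\right\rangle=\left\langle r\,\partial_t\vec X\cdot\vec n,\mu\,|\partial_\rho\vec X|\right\rangle=\left\langle r\,\partial_\rho\mu,\partial_\rho\mu\,|\partial_\rho\vec X|^{-1}\right\rangle\ge0$. Solving the first-variation identity above for $\frac{d\mathcal E}{dt}$, adding the elementary time-derivative of the substrate part $-\sigma\pi(r_0^2-r_L^2)$ of $W$, and using that the $\sigma$-weighted contact-line contributions cancel, one arrives at
\begin{equation*}
\frac{1}{2\pi}\frac{dW}{dt}=-\left\langle r\,\partial_\rho\mu,\;\partial_\rho\mu\,|\partial_\rho\vec X|^{-1}\right\rangle-\frac1\eta\Big[r_L\big(\partial_t r(L,t)\big)^2+r_0\big(\partial_t r(0,t)\big)^2\Big]\le0 ,
\end{equation*}
since $r\ge0$ and $\eta>0$. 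Integrating over $[t_1,t]$ then gives $W(t)\le W(t_1)\le W(0)$, which is the second assertion.

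The step I expect to be the main obstacle is the bookkeeping of the boundary terms in the last step: one must check that the $\sigma$-weighted contact-line contributions produced by \eqref{eqn:vaf_2} cancel exactly against $\frac{d}{dt}$ of the substrate energy, and that the remaining contact-line terms assemble into the manifestly non-positive $-\frac1\eta[\,\cdots\,]$. This rests on the relaxed contact-angle law \eqref{eqn:BC2}, on the identity $f(\theta;\sigma)+\sigma=\gamma(\theta)\cos\theta-\gamma'(\theta)\sin\theta$ (so that the endpoint trace of $r\,\mat{B}_q(\theta)\partial_s\vec X$ appearing in \eqref{eqn:varf_2} feeds both the dissipative term $\tfrac1\eta r(\partial_t r)^2$ and the substrate term $\sigma r\,\partial_t r$), and on keeping the orientation of $\vec n$ at the inner and outer contact points consistent; it is this computation that fixes the various signs. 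I would also remark that, in contrast with the fully discrete scheme, this continuous-in-time statement uses no positive-definiteness of $\mat{B}_q(\theta)$, so neither $\gamma(\theta)=\gamma(\pi+\theta)$ (case $q=0$) nor $3\gamma(\theta)>\gamma(\pi+\theta)$ (case $q=1$) is needed here.
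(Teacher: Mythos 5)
Your proposal is correct and follows essentially the same route as the paper: the same transport identity for $\vol(\vec X(t))$ with $\varphi\equiv 1$, and for the energy the same choice $\varphi=\mu$, $\vec\psi=\partial_t\vec X$ combined with the identity $\mat{B}_q(\theta)\partial_s\vec X=\gamma(\theta)\vec\tau+\gamma'(\theta)\vec n$, leading to $\frac{\rm d}{{\rm d}t}W=-2\pi\langle r\partial_\rho\mu,\partial_\rho\mu|\partial_\rho\vec X|^{-1}\rangle-\frac{2\pi}{\eta}[r_L(\partial_t r(L,t))^2+r_0(\partial_t r(0,t))^2]\le 0$. The contact-line/substrate bookkeeping you flag is exactly what the paper absorbs (via the relaxed contact-angle law already built into \eqref{eqn:vaf_2}), and your observation that no positive-definiteness or $\gamma$-symmetry condition is needed at the continuous level is likewise consistent with the paper.
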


\begin{proof}
Taking the derivative of $\text{vol}(\vec X(t))$ with respect to $t$, we get 
\begin{align}
\label{eqn:structure_vf_pf1}
\frac{\mathrm{d} }{\mathrm{d} t} \text{vol}(\vec X(t))&=2\pi\frac{\mathrm{d} }{\mathrm{d} t}\int\limits_\mathbb{I}r\partial_\rho r zd\rho \nonumber \\
&=2\pi\int_\mathbb{I}(\partial_t r \partial_\rho r z + r\partial_\rho r \partial_t z)d\rho + 2\pi\int_\mathbb{I}r\partial_\rho\partial_t r\,zd\rho \nonumber \\
&=2\pi\int_\mathbb{I}(\partial_\rho r z\partial_t r + r\partial_\rho \partial_t z)d\rho - 2\pi\int_\mathbb{I}\partial_\rho(rz)\partial_t rd\rho + 2\pi(rz\partial_t r)\bigg|_{\rho=0} ^{\rho=1} \nonumber \\
&=2\pi\int_\mathbb{I}(r\partial_\rho r\partial_t z - r\partial_\rho r \partial_t r)d\rho \nonumber \\
&=2\pi\int_\mathbb{I}r\partial_t\vec X\cdot\vec n\left | \partial_\rho\vec X \right |d\rho, \qquad t\ge0.
\end{align}
Choosing $\varphi = 1$ in \eqref{eqn:vaf_1}, we obtain
\begin{equation}\label{eqn:vaf_11}
\left\langle r\partial_t\vec X\cdot\vec n, \left | \partial_\rho\vec X \right |\right\rangle=\left\langle r\partial_\rho\mu, 0 \left| \partial_\rho\vec X \right |^{-1}\right\rangle=0, \qquad t\ge0,
\end{equation}
which together with \eqref{eqn:structure_vf_pf1} imply the volume conservation law in \eqref{eqn:structure_vf}. 

Next, we take the derivative of $W(t)$ with respect to $t$ to arrive at  
\begin{align}
\frac{\mathrm{d} }{\mathrm{d} t} W(t)&=\frac{\mathrm{d} }{\mathrm{d} t}\left[2\pi\int_{0}^{L(t)} r\,\gamma(\theta)\,\left | \partial_s\vec X \right |ds + \sigma\pi(r_0 ^2-r_L ^2)\right] =\frac{\mathrm{d} }{\mathrm{d} t}\left[2\pi\int\limits_\mathbb{I} r\,\gamma(\theta)\,\left | \partial_\rho\vec X \right |d\rho - \sigma\pi(r_0 ^2-r_L ^2)\right] \nonumber \\
&=2\pi\int\limits_\mathbb{I} \partial_t r\gamma(\theta)\left | \partial_\rho\vec X \right |d\rho + 2\pi\int\limits_\mathbb{I}  r\gamma'(\theta)\partial_t \theta\left | \partial_\rho\vec X \right |d\rho + 2\pi\int\limits_\mathbb{I}  r\gamma(\theta)\partial_t\left(\left | \partial_\rho\vec X \right |\right)d\rho - 2\pi\sigma\bigg[r_0\partial_t r_0-r_L\partial_t r_L\bigg] \nonumber \\
&=2\pi\int\limits_\mathbb{I} \partial_t r\gamma(\theta)\left | \partial_\rho\vec X \right |d\rho + 2\pi\int\limits_\mathbb{I}  r\gamma'(\theta)\vec n\cdot\partial_\rho\partial_t\vec X d\rho + 2\pi\int\limits_\mathbb{I}  r\gamma(\theta)\vec\tau\cdot\partial_\rho\partial_t\vec X d\rho - 2\pi\sigma\bigg[r_0\partial_t r_0-r_L\partial_t r_L\bigg] \nonumber \\
&=2\pi\int\limits_\mathbb{I} \partial_t r\gamma(\theta)\left | \partial_\rho\vec X \right |d\rho 
+ 2\pi\int\limits_\mathbb{I}  r\bigg[\gamma'(\theta)\vec n + \gamma(\theta)\vec \tau\bigg]\cdot\partial_\rho\partial_t\vec X d\rho - 2\pi\sigma\bigg[r_0\partial_t r_0-r_L\partial_t r_L\bigg] \nonumber \\
&=2\pi\int\limits_\mathbb{I}\left[\partial_t r\gamma(\theta)\left | \partial_\rho\vec X \right | + r\mat{B}_{q}(\theta)\partial_\rho\vec X\cdot\partial_\rho\partial_t \vec X\left | \partial_\rho\vec X \right |^{-1}\right]d\rho-2\pi\sigma\bigg[r_0\partial_t r_0-r_L\partial_t r_L\bigg], \quad t\ge0.  
\end{align}
Setting $\varphi=\mu$ and $\psi=\partial_t\vec X$ in  \eqref{eqn:vaf}, we obtain 
\begin{equation}\label{eqn:vaf_22}
\frac{\mathrm{d} }{\mathrm{d} t} W(t)=-2\pi\left\langle r\partial_\rho\mu, \partial_\rho\mu\left | \partial_\rho\vec X \right |^{-1}\right\rangle-\frac{2\pi}{\eta }\bigg[r_L\left(\partial_t r(L, t)\right)^2+r_0 \left(\partial_t r(0,  t)\right)^2\bigg]\leq 0, 
\end{equation}
which shows the energy stability in \eqref{eqn:structure_vf}. 
We have completed the proof of this theorem. 
\end{proof}

\section{Structure-preserving finite element approximation}\label{sec4}
In this section, we intend to construct a structure-preserving finite element approximation for the variational formulation \eqref{eqn:vaf}, which can preserve the volume conservation and energy stability. 

\subsection{Finite element approximation}
We divide $[0,T]=\cup_{j=0}^{M-1}[t_m, t_{m+1}]$ with time steps $\ttau_m = t_{m+1}-t_m$, and the domain $\mathbb{I}$ is divided into $\mathbb{I}=\cup_{j=1}^{J}\mathbb{I}_j=\cup_{j=1}^{J}[q_{j-1}, q_{j}]$ with $q_j=jh$ and $h=J^{-1}$. Then, we define the finite element spaces
\begin{equation}
\mathbb{K}^h=\mathbb{K}^h(\mathbb{I}):=\left \{ u\in C(\mathbb{I}):u|_{\mathbb{I}_j}\in\mathbb{P}_1, \quad\forall j=1, 2, \dots, J \right \}\subseteq H^1(\mathbb{I}), \quad
\mathbb{X}^h := \mathbb{K}^h\times\mathbb{K}_0^h, \quad\mathbb{K}_0 ^h := \mathbb{K}^h\cap H_0^1(\mathbb{I}), \nonumber
\end{equation}
where $\mathbb{P}_1$ represents the space of all polynomials with degree at most $1$. 

Let $\Gamma^m(t)=\vec X^m(\cdot, t)\in\mathbb{X}$ be the approximation of $\{ \vec X(t)  \}_{t\in[0, T]}$. This gives the polygonal curves $\Gamma^m=\vec X^m(\mathbb{I})$. We assume
\begin{equation}
r^m>0\quad\text{in}~\,\rho\in(0, 1]\quad\text{and}\quad\left | \vec X^m \right | > 0\quad\text{in}~\,\rho\in(0, 1),\quad 0\le m\le M, \nonumber
\end{equation}
and introduce approximations of units tangent vector and outward normal vector 
\begin{equation}
\vec\tau ^m=\vec X_s ^m=\frac{\vec X_\rho ^m}{\left | \vec X_\rho ^m \right | } \quad\text{and}\quad\vec n^m=-(\vec\tau ^m)^\bot. \nonumber
\end{equation}
Furthermore, for any  piecewise continuous functions $\vec u$, $\vec v$, with possible jumps at notes $\{ q_j\}_{j=1}^J$, we define the mass-lumped $L^2$-inner product $\left \langle \cdot, \cdot \right \rangle^h $ 
as 
\begin{equation}
\left \langle \vec u, \vec v \right \rangle^h = \frac{1}{2}h\sum_{j=1}^{J}\left[(\vec u\cdot\vec v)(q_j^-)+(\vec u\cdot\vec v)(q_{j-1}^+)\right].
\end{equation}

We first introduce a \textbf{structure-preserving approximation} based on the variational formulation \eqref{eqn:vaf}. In this discrete scheme, the volume conservation and energy dissipation laws can be proved in theory. The discretization is given as follows. For $\Gamma(0):=\vec X(\cdot, 0)\in\mathbb{X}^h$, find $(\delta\vec X^{m+1}, \mu ^{m+1})\in\mathbb{X}^h\times\mathbb{K}^h$ with $\vec X^{m+1}=\vec X^m+\delta\vec X^{m+1}$, such that
\begin{subequations}
\label{eqn:structure}
\begin{align}
&\frac{1}{\ttau_m}\left \langle \vec X^{m+1}-\vec X^m, \varphi ^h\vec f^{m+\frac{1}{2}} \right \rangle - \left \langle r^m\partial_\rho\mu ^{m+1}, \partial_\rho\varphi^h\left | \partial_\rho\vec X ^m \right |^{-1} \right \rangle=0, \qquad\forall\varphi\in\mathbb{K}^h,  
\label{eqn:structure_a}
\\
&\left \langle \mu ^{m+1}\vec f^{m+\frac{1}{2}}, \vec\psi ^h \right \rangle + \left \langle \gamma(\theta ^{m+1}), \psi_1 ^h \left | \partial_\rho\vec X^{m+1} \right | \right \rangle + 
\left\langle r^m\mat{B}_{q}(\theta ^m)\partial_\rho\vec X^{m+1},  
 \partial_\rho\vec\psi ^h\left | \partial_\rho\vec X ^m \right |^{-1}\right\rangle \nonumber \\
 &~~~~+\frac{1}{2\eta\ttau_m }\bigg[(r_L^{m+1}+r_L^m)(r_L^{m+1}-r_L^m)\psi_1 ^h(1)+(r_0^{m+1}+r_0^m)(r_0^{m+1}-r_0^m)\psi_1 ^h(0)\bigg] \nonumber \\
 &~~~~-\frac{\sigma}{2}\bigg[(r_L^{m+1}+r_L^m)\psi_1 ^h(1)-(r_0^{m+1}+r_0^m)\psi_1 ^h(0)\bigg]=0, \quad
\forall\vec\psi=(\psi_1, \psi_2)^\top\in\mathbb{X}^h,
\label{eqn:structure_b}
\end{align}
\end{subequations}
where $\vec f^{m+\frac{1}{2}}\in [L^\infty(\bI)]^2$ represents a time-integrated approximation of $\pol f=r\,|\partial_\rho\pol X|\,\pol n$, given by
\begin{align}
\pol f^{m+\frac{1}{2}}= -\frac{1}{6}\Bigl[2r^m\,\partial_\rho\pol X^m+2r^{m+1}\,\partial_\rho\pol X^{m+1} + r^m\,\partial_\rho\pol X^{m+1} + r^{m+1}\,\partial_\rho\pol X^m\Bigr]^\perp.\label{eq:weightednormal}
\end{align}
\begin{thm}[volume conservation] Let $(\vec X^{m+1},\mu^{m+1})$ be a solution of \eqref{eqn:structure}. Then it holds that
\begin{equation}
\vol(\vec X^{m+1})-\vol(\vec X^m) = 0.\label{eq:dvolumec}
\end{equation}
\end{thm}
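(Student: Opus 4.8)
The plan is to reduce the discrete identity \eqref{eq:dvolumec} to the algebraic fact that the weighted normal $\pol f^{m+\frac12}$ of \eqref{eq:weightednormal} is exactly the $\rho$-pointwise average, over the affine homotopy between $\pol X^m$ and $\pol X^{m+1}$, of the continuous flux $\pol f = r\,|\partial_\rho\pol X|\,\pol n = -\,r\,(\partial_\rho\pol X)^\bot$. Granting this, the conservation law falls out of the scheme by testing with the constant function. Concretely, I would set $\pol X(\epsilon) := \pol X^m + \epsilon\,(\pol X^{m+1}-\pol X^m)$ for $\epsilon\in[0,1]$, with components $r(\epsilon),z(\epsilon)$. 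Since $\mathbb{X}^h = \mathbb{K}^h\times\mathbb{K}_0^h$ is a linear space, $\pol X(\epsilon)\in\mathbb{X}^h$ for every $\epsilon$, and in particular $z(\epsilon)(0) = z(\epsilon)(1) = 0$; this is the only property of the homotopy the argument needs.

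Step one: differentiate $\vol(\pol X(\epsilon)) = 2\pi\int_{\mathbb{I}} r(\epsilon)\,\partial_\rho r(\epsilon)\,z(\epsilon)\rd\rho$ in $\epsilon$. This is the very computation carried out in \eqref{eqn:structure_vf_pf1}, with the time derivative replaced by $\partial_\epsilon$: the product rule followed by integration by parts performed element by element — the interior-node contributions cancelling because $r(\epsilon)$, $z(\epsilon)$ and $\partial_\epsilon r = r^{m+1}-r^m$ all lie in $\mathbb{K}^h$ and hence are continuous, the two endpoint contributions vanishing because $z(\epsilon) = 0$ there — gives
\begin{equation*}
\dd{\epsilon}\vol\bigl(\pol X(\epsilon)\bigr) = 2\pi\int_{\mathbb{I}}\bigl(\pol X^{m+1}-\pol X^m\bigr)\cdot\Bigl[-\,r(\epsilon)\,\bigl(\partial_\rho\pol X(\epsilon)\bigr)^\bot\Bigr]\rd\rho .
\end{equation*}
Integrating over $\epsilon\in[0,1]$ and interchanging the integrations (Fubini; all integrands are polynomials) yields $\vol(\pol X^{m+1})-\vol(\pol X^m) = 2\pi\bigl\langle \pol X^{m+1}-\pol X^m,\ -\int_0^1 r(\epsilon)\,(\partial_\rho\pol X(\epsilon))^\bot\,\rd\epsilon\bigr\rangle$.

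Step two: evaluate the inner integral. Writing $r(\epsilon) = (1-\epsilon)\,r^m+\epsilon\,r^{m+1}$ and $\partial_\rho\pol X(\epsilon) = (1-\epsilon)\,\partial_\rho\pol X^m+\epsilon\,\partial_\rho\pol X^{m+1}$, expanding the product, and using $\int_0^1(1-\epsilon)^2\rd\epsilon = \int_0^1\epsilon^2\rd\epsilon = \tfrac13$ and $\int_0^1\epsilon(1-\epsilon)\rd\epsilon = \tfrac16$, one gets $\int_0^1 r(\epsilon)\,\partial_\rho\pol X(\epsilon)\,\rd\epsilon = \tfrac16\bigl[2r^m\partial_\rho\pol X^m + 2r^{m+1}\partial_\rho\pol X^{m+1} + r^m\partial_\rho\pol X^{m+1} + r^{m+1}\partial_\rho\pol X^m\bigr]$; applying $-(\,\cdot\,)^\bot$ (which is linear) reproduces exactly $\pol f^{m+\frac12}$ of \eqref{eq:weightednormal}. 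Hence $\vol(\pol X^{m+1})-\vol(\pol X^m) = 2\pi\langle\pol X^{m+1}-\pol X^m,\,\pol f^{m+\frac12}\rangle$, the pairing being the exact $L^2(\mathbb{I})$ inner product. Finally, choosing $\varphi^h\equiv 1\in\mathbb{K}^h$ in \eqref{eqn:structure_a} kills the $\partial_\rho\varphi^h$ term and leaves $\langle\pol X^{m+1}-\pol X^m,\,\pol f^{m+\frac12}\rangle = 0$, which together with the identity just obtained gives \eqref{eq:dvolumec}.

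I do not expect a genuine obstacle here: \eqref{eq:weightednormal} is designed precisely so that this homotopy average is exact, so the proof is essentially bookkeeping mirroring the continuous argument in Theorem~\ref{lem:structure}. The two things to watch are (i) checking that the affine homotopy stays in $\mathbb{X}^h$, so that the boundary terms in the element-wise integration by parts really drop — this is where $z = 0$ at the parametric endpoints (from $z^m,z^{m+1}\in\mathbb{K}_0^h$) enters — and (ii) keeping the sign and $(\,\cdot\,)^\bot$ conventions aligned with \eqref{tangent} and with \eqref{eqn:structure_vf_pf1}. A purely computational route — expanding both sides of the target identity element by element in the nodal values of $\pol X^m$ and $\pol X^{m+1}$ and matching coefficients — would also work, but it is messier and less transparent than the homotopy argument.
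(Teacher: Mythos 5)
Your proposal is correct and follows essentially the same route as the paper: the paper's proof simply tests \eqref{eqn:structure_a} with a constant and invokes the identity \eqref{eq:dism}, which is exactly the identity you establish. The only difference is that you additionally supply the proof of \eqref{eq:dism} via the affine-homotopy average (which is indeed how the weighted normal \eqref{eq:weightednormal} is designed), whereas the paper merely recalls it; your verification of that step is sound.
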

\begin{proof}
By taking $\varphi^h=\ttau_m$ in \eqref{eqn:structure_a} and recalling 
\begin{equation} \label{eq:dism}
\vol(\pol X^{m+1})-\vol(\pol X^m) = 2\pi\,\Bigl\langle\pol X^{m+1}-\pol X^m,~\pol f^{m+\frac{1}{2}}\Bigr\rangle,\quad\mbox{for}\quad\vec X^m\in\mathbb{X},\, \vec X^{m+1}\in\mathbb{X},\quad 0\leq m\leq M-1,
\end{equation}
it is straightforward to derive the volume conservation  \eqref{eq:dvolumec}.
\end{proof}
\begin{rem}
Due to the diverse proof process for the energy stability property of numerical formulations under different parameter values $q$, we will divide the discussion into the following two subsections.
\end{rem}

\subsection{Energy decay property: $q=0$}
\begin{thm}
Assume the matrix $\mat{B}_0(\theta)$ defined in \eqref{Matrix:Bqx} holds 
\begin{equation}\label{eq:defofk0}
\bigg[\gamma(\theta)\mat{B}_0(\theta)\,(\cos\hat{\theta}, \sin\hat{\theta})^\top\bigg]\cdot(\cos\hat{\theta}, \sin\hat{\theta})^\top\geq \gamma(\hat{\theta})^2,\quad \forall \theta,\hat{\theta}\in [-\pi, \pi],
	\end{equation}
 then it holds that
\begin{equation}
\label{eq:denergyd}
W(\vec X^{m+1})+ 2\pi\ttau_m\left \langle r^m(\partial_\rho\mu ^{m+1})^2, \left | \vec X_\rho ^m \right |^{-1} \right \rangle + \frac{\pi}{\eta\ttau_m}\left[(r_L^{m+1}+r_L^m)(r_L^{m+1}-r_L^m)^2+(r_0^{m+1}+r_0^m)(r_0^{m+1}-r_0^m)^2\right]\le W(\vec X^m). 
\end{equation}
\end{thm}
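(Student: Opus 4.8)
The plan is to mimic the continuous energy-dissipation argument from Theorem \ref{lem:structure}, but now working at the discrete level and paying the price of discretization errors via convexity-type estimates. First I would test the discrete scheme: take $\varphi^h = \mu^{m+1}$ in \eqref{eqn:structure_a} and $\vec\psi^h = \vec X^{m+1} - \vec X^m = \delta\vec X^{m+1}$ in \eqref{eqn:structure_b}, then subtract. The term $\langle \vec X^{m+1} - \vec X^m, \mu^{m+1}\vec f^{m+\frac12}\rangle$ cancels between the two equations, leaving an identity of the shape
\begin{align*}
&-\ttau_m \langle r^m (\partial_\rho\mu^{m+1})^2, |\partial_\rho\vec X^m|^{-1}\rangle
= \langle \gamma(\theta^{m+1}), \psi_1^h |\partial_\rho\vec X^{m+1}|\rangle
+ \langle r^m \mat{B}_0(\theta^m)\partial_\rho\vec X^{m+1}, \partial_\rho\delta\vec X^{m+1}|\partial_\rho\vec X^m|^{-1}\rangle \\
&\qquad + \tfrac{1}{2\eta\ttau_m}\big[(r_L^{m+1}+r_L^m)(r_L^{m+1}-r_L^m)^2 + (r_0^{m+1}+r_0^m)(r_0^{m+1}-r_0^m)^2\big]
- \tfrac{\sigma}{2}\big[(r_L^{m+1}+r_L^m)(r_L^{m+1}-r_L^m) - (\text{node }0)\big].
\end{align*}
So it suffices to show that the right-hand side bounds $W(\vec X^{m+1}) - W(\vec X^m)$ from below, i.e. that the two bulk terms dominate $2\pi\int_{\mathbb I} r\gamma(\theta)|\partial_\rho\vec X| d\rho$ evaluated at level $m+1$ minus level $m$, and that the $\sigma$-terms exactly reproduce the discrete change in the substrate energy $-\sigma\pi(r_0^2 - r_L^2)$.

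The substrate part is routine: $(r_L^{m+1}+r_L^m)(r_L^{m+1}-r_L^m) = (r_L^{m+1})^2 - (r_L^m)^2$, so the $\sigma$-terms telescope exactly into $-\sigma\pi[(r_0^{m+1})^2-(r_L^{m+1})^2] + \sigma\pi[(r_0^m)^2-(r_L^m)^2]$, matching the substrate energy difference after the $2\pi$ normalization. The heart of the matter is the bulk inequality: I must show, elementwise on each interval $\mathbb I_j$,
\[
r^m \,\mat{B}_0(\theta^m)\partial_\rho\vec X^{m+1}\cdot(\partial_\rho\vec X^{m+1} - \partial_\rho\vec X^m)\,|\partial_\rho\vec X^m|^{-1} + \gamma(\theta^{m+1})\,r_{\text{avg}}\,|\partial_\rho\vec X^{m+1}|\;\geq\; r\,\gamma(\theta)|\partial_\rho\vec X|\Big|^{m+1} - r\,\gamma(\theta)|\partial_\rho\vec X|\Big|^{m},
\]
where I must be careful with how $r$ is sampled (the scheme uses $r^m$ as the weight in the $\mat{B}_0$-term, while $\gamma(\theta^{m+1})$ multiplies $|\partial_\rho\vec X^{m+1}|$ with its own radial weight — I would track exactly which average of $r$ appears in $\langle\gamma(\theta^{m+1}),\psi_1^h|\partial_\rho\vec X^{m+1}|\rangle$ after setting $\psi_1^h$ to the first component of $\delta\vec X^{m+1}$, since that component is precisely $\delta r^{m+1}$). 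The key scalar lemma is the Cauchy–Schwarz/Young step: for vectors $\vec a = \partial_\rho\vec X^{m+1}$, $\vec b = \partial_\rho\vec X^m$ with $\theta^{m+1},\theta^m$ their angles,
\[
\mat{B}_0(\theta^m)\,\vec a\cdot\vec b \;\geq\; |\vec b|\,\Big(2\gamma(\theta^{m+1})|\vec a| - \gamma(\theta^m)|\vec b|\Big)\quad\text{minus a correction,}
\]
which after rearrangement yields $\mat{B}_0(\theta^m)\vec a\cdot(\vec a - \vec b)|\vec b|^{-1} \geq \gamma(\theta^{m+1})|\vec a| \cdot(\text{something}) - \gamma(\theta^m)|\vec b|$; the hypothesis \eqref{eq:defofk0}, namely $\big[\gamma(\theta)\mat{B}_0(\theta)(\cos\hat\theta,\sin\hat\theta)^\top\big]\cdot(\cos\hat\theta,\sin\hat\theta)^\top \geq \gamma(\hat\theta)^2$, is exactly what feeds the Cauchy–Schwarz bound $\mat B_0(\theta^m)\vec a\cdot\vec a \geq \gamma(\theta^{m+1})^2|\vec a|^2/\gamma(\theta^m)$ so that the quadratic form $\mat B_0(\theta^m)\vec a\cdot(\vec a-\vec b)$ can be split via $\vec a\cdot(\vec a-\vec b) = \frac12|\vec a|^2 - \frac12|\vec b|^2 + \frac12|\vec a-\vec b|^2$ and the symmetric positive-definiteness of $\mat B_0$ (valid since $\gamma(\theta)=\gamma(\theta+\pi)$ for $q=0$).

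The main obstacle I anticipate is bookkeeping the radial weights: the scheme is \emph{not} a pure geometric flow — every integrand carries a factor of $r$, and the discretization uses $r^m$ in the stiffness term but a level-$(m+1)$ contribution hidden inside $\langle\gamma(\theta^{m+1}),\psi_1^h|\partial_\rho\vec X^{m+1}|\rangle$ when $\psi_1^h=\delta r^{m+1}$. Getting the telescoping $\sum_m [W(\vec X^{m+1})-W(\vec X^m)]$ to close requires that the $r$-weighted elementwise inequality hold with the \emph{specific} averaging of $r$ built into $\langle\cdot,\cdot\rangle^h$ and into $\vec f^{m+\frac12}$; I would verify this by expanding $\langle\gamma(\theta^{m+1}),\delta r^{m+1}|\partial_\rho\vec X^{m+1}|\rangle$ and recognizing $\gamma(\theta^{m+1})|\partial_\rho\vec X^{m+1}|(\delta r^{m+1})$ on each element as precisely the "$\gamma(\theta)\partial_s z + \gamma'(\theta)\partial_s r$ over $r$" boundary-type contribution from Lemma \ref{lem:equivalent}, now in discrete telescoped form. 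Once the per-element inequality is established with the correct weights, summing over $j$ and over $m$, and discarding the manifestly nonnegative terms $2\pi\ttau_m\langle r^m(\partial_\rho\mu^{m+1})^2,|\vec X_\rho^m|^{-1}\rangle$ and the $\frac{\pi}{\eta\ttau_m}$ contact-line terms on the left, gives \eqref{eq:denergyd} directly.
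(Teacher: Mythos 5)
Your proposal follows essentially the same route as the paper's proof: test with $\varphi^h=\ttau_m\mu^{m+1}$ and $\vec\psi^h=\delta\vec X^{m+1}$ so the $\vec f^{m+\frac12}$ terms cancel, telescope the $\sigma$-terms exactly, split the stiffness term by the convexity inequality $\mat{B}_0\vec v\cdot(\vec v-\vec w)\ge\tfrac12\mat{B}_0\vec v\cdot\vec v-\tfrac12\mat{B}_0\vec w\cdot\vec w$ (using symmetric positive definiteness, hence $\gamma(\theta)=\gamma(\theta+\pi)$), and combine hypothesis \eqref{eq:defofk0} with a Young/AM--GM step to get the elementwise bound $\gamma(\theta^{m+1})|\partial_\rho\vec X^{m+1}|-\gamma(\theta^m)|\partial_\rho\vec X^m|$, after which the term $\langle\gamma(\theta^{m+1}),\delta r^{m+1}|\partial_\rho\vec X^{m+1}|\rangle$ upgrades the $r^m$-weight to $r^{m+1}$ and the energy telescopes. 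The bookkeeping concern you flag resolves exactly as you anticipate, so the argument is correct and matches the paper's.
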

\begin{proof}
Taking $\varphi ^h=\ttau_m\mu ^{m+1}$ in \eqref{eqn:structure_a} and $\vec\psi ^h=\delta\vec X^{m+1}=(\vec X^{m+1}-\vec X^m)$ in \eqref{eqn:structure_b}, we get 
 \begin{align}\label{eqn:qz}
&\ttau_m\left \langle r^m(\partial_\rho\mu ^{m+1})^2, \left | \vec X_\rho ^m \right |^{-1} \right \rangle + \left \langle \gamma(\theta ^{m+1}), (r^{m+1}-r^m)\left | \partial_\rho\vec X^{m+1} \right | \right \rangle + 
\left\langle r^m\mat{B}_{0}(\theta ^m)\partial_\rho\vec X^{m+1},  
 \partial_\rho(\vec X^{m+1}-\vec X^m)\left | \partial_\rho\vec X ^m \right |^{-1}\right\rangle \nonumber \\
 &~~~~+\frac{1}{2\eta\ttau_m }\bigg[(r_L^{m+1}+r_L^m)(r_L^{m+1}-r_L^m)^2+(r_0^{m+1}+r_0^m)(r_0^{m+1}-r_0^m)^2\bigg] \nonumber \\
 &~~~~-\frac{\sigma}{2}\bigg[(r_L^{m+1}+r_L^m)(r_L^{m+1}-r_L^m)-(r_0^{m+1}+r_0^m)(r_0^{m+1}-r_0^m)\bigg]=0.  
 \end{align}
Due to the symmetric positive definite of the matrix $\mat{B}_0(\theta)$, we have 
\begin{equation}\label{eq:inequa}
\mat{B}_0(\theta)\vec v\cdot(\vec v-\vec w)\ge\frac{1}{2}\mat{B}_0(\theta)\vec v\cdot\vec v - \frac{1}{2}\mat{B}_0(\theta)\vec w\cdot\vec w, \qquad\forall\vec v, \vec w\in\mathbb{R}^2. 
\end{equation}
From \eqref{eq:defofk0}, we can obtain
\begin{equation}
\gamma(\theta ^m)\mat{B}_0(\theta ^m)\partial_\rho\vec X^{m+1}\cdot\partial_\rho\vec X^{m+1}\ge\gamma^2(\theta ^{m+1})\left | \vec X_\rho ^{m+1} \right |^2. 
\end{equation}
Then we have 
\begin{align}\label{eqn:zm}
&\left\langle r^m\mat{B}_{0}(\theta ^m)\partial_\rho\vec X^{m+1},  
 \partial_\rho(\vec X^{m+1}-\vec X^m)\left | \partial_\rho\vec X ^m \right |^{-1}\right\rangle + \frac{1}{2\eta\ttau_m }\bigg[(r_L^{m+1}+r_L^m)(r_L^{m+1}-r_L^m)^2+(r_0^{m+1}+r_0^m)(r_0^{m+1}-r_0^m)^2\bigg] \nonumber \\
 &~~~~-\frac{\sigma}{2}\bigg[(r_L^{m+1}+r_L^m)(r_L^{m+1}-r_L^m)-(r_0^{m+1}+r_0^m)(r_0^{m+1}-r_0^m)\bigg] \nonumber \\
 &~~~~\ge \frac{1}{2} \left\langle r^m\mat{B}_{0}(\theta ^m)\partial_\rho\vec X^{m+1}, 
 \partial_\rho\vec X^{m+1}\left | \partial_\rho\vec X ^m \right |^{-1}\right\rangle - \frac{1}{2} \left\langle r^m\mat{B}_{0}(\theta ^m)\partial_\rho\vec X^m,  
 \partial_\rho\vec X^m\left | \partial_\rho\vec X ^m \right |^{-1}\right\rangle \nonumber \\
 &~~~~+\frac{1}{2\eta\ttau_m }\bigg[(r_L^{m+1}+r_L^m)(r_L^{m+1}-r_L^m)^2+(r_0^{m+1}+r_0^m)(r_0^{m+1}-r_0^m)^2\bigg] + \frac{\sigma}{2}\bigg[(r_0^{m+1})^2-(r_0^m)^2-(r_L^{m+1})^2+(r_L^m)^2\bigg] \nonumber \\
 &~~~~=\frac{1}{2} \left\langle r^m, \mat{B}_{0}(\theta ^m)\partial_\rho\vec X^{m+1}\cdot 
 \partial_\rho\vec X^{m+1}\left | \partial_\rho\vec X ^m \right |^{-1} + \gamma(\theta ^m)\left | \partial_\rho\vec X ^m \right |\right\rangle - \left\langle r^m, \gamma(\theta ^m)\left | \partial_\rho\vec X ^m \right |\right\rangle \nonumber \\
 &~~~~+\frac{1}{2\eta\ttau_m }\bigg[(r_L^{m+1}+r_L^m)(r_L^{m+1}-r_L^m)^2+(r_0^{m+1}+r_0^m)(r_0^{m+1}-r_0^m)^2\bigg] + \frac{\sigma}{2}\bigg[(r_0^{m+1})^2-(r_0^m)^2-(r_L^{m+1})^2 + (r_L^m)^2\bigg] \nonumber \\
 &~~~~\ge \left\langle r^m, \sqrt{\gamma(\theta ^m)(\mat{B}_{0}(\theta ^m)\partial_\rho\vec X^{m+1})\cdot 
 \partial_\rho\vec X^{m+1}}\right\rangle - \left\langle r^m, \gamma(\theta ^m)\left | \partial_\rho\vec X ^m \right |\right\rangle \nonumber \\
 &~~~~+\frac{1}{2\eta\ttau_m }\bigg[(r_L^{m+1}+r_L^m)(r_L^{m+1}-r_L^m)^2+(r_0^{m+1}+r_0^m)(r_0^{m+1}-r_0^m)^2\bigg] + \frac{\sigma}{2}\bigg[(r_0^{m+1})^2-(r_0^m)^2-(r_L^{m+1})^2 + (r_L^m)^2\bigg] \nonumber \\
 &~~~~\ge \left\langle r^m, \gamma(\theta ^{m+1})\left | \partial_\rho\vec X ^{m+1} \right |\right\rangle - \left\langle r^m, \gamma(\theta ^m)\left | \partial_\rho\vec X ^m \right |\right\rangle \nonumber \\
 &~~~~+\frac{1}{2\eta\ttau_m }\bigg[(r_L^{m+1}+r_L^m)(r_L^{m+1}-r_L^m)^2+(r_0^{m+1}+r_0^m)(r_0^{m+1}-r_0^m)^2\bigg] + \frac{\sigma}{2}\bigg[(r_0^{m+1})^2-(r_0^m)^2-(r_L^{m+1})^2 + (r_L^m)^2\bigg].
\end{align}
Combining \eqref{eqn:qz} with \eqref{eqn:zm}, we derive the energy stability \eqref{eq:denergyd}. 
\end{proof}
\begin{rem}
We define minimal stabilizing function $\mathscr S_0(\theta)$ as 
\begin{equation}
\mathscr S_0(\theta):=inf\left \{ \mathscr S(\theta)|\bigg[\gamma(\theta)\mat{B}_0(\theta)\,(\cos\hat{\theta}, \sin\hat{\theta})^\top\bigg]\cdot(\cos\hat{\theta}, \sin\hat{\theta})^\top\geq \gamma(\hat{\theta})^2, \forall\hat{\theta}\in[-\pi, \pi] \right \}, \quad\theta\in[-\pi, \pi]. 
\end{equation}
The inequality \eqref{eq:inequa} can be satisfied if $\gamma(\theta)=\gamma(\pi+\theta)$ and $\mathscr S(\theta)\ge \mathscr S_0(\theta)$ for $\theta\in[-\pi, \pi]$ (see \cite{bao2023symmetrized}). 
\end{rem}

\subsection{Energy decay property: $q=1$}
\begin{thm}
Assume 
the matrix $\mat{B}_1(\theta)$ defined in \eqref{Matrix:Bqx} satisfies 
\begin{equation}\label{eq:defofk1}
\frac{1}{\left | \vec v \right |}\left(\mat{B}_1(\theta)\vec w\right)\cdot(\vec w-\vec v)\ge\left | \vec w \right |\gamma(\hat{\theta})-\left | \vec v \right |\gamma(\theta), 
	\end{equation}
 then it holds that
\begin{equation}
\label{eq:denergyd2}
W(\vec X^{m+1})+ 2\pi\ttau_m\left \langle r^m(\partial_\rho\mu ^{m+1})^2, \left | \vec X_\rho ^m \right |^{-1} \right \rangle + \frac{\pi}{\eta\ttau_m}\left[(r_L^{m+1}+r_L^m)(r_L^{m+1}-r_L^m)^2+(r_0^{m+1}+r_0^m)(r_0^{m+1}-r_0^m)^2\right]\le W(\vec X^m). 
\end{equation}
\end{thm}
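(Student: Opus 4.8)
The strategy I would follow mirrors the $q=0$ proof, with the symmetric-matrix convexity inequality \eqref{eq:inequa} replaced by the hypothesis \eqref{eq:defofk1}. First I would test \eqref{eqn:structure_a} with $\varphi^h=\ttau_m\mu^{m+1}$ and \eqref{eqn:structure_b} with $\vec\psi^h=\vec X^{m+1}-\vec X^m$; eliminating the $\mu^{m+1}\vec f^{m+\frac12}$ coupling between the two relations reproduces the identity \eqref{eqn:qz}, except that $\mat{B}_1(\theta^m)$ now appears in place of $\mat{B}_0(\theta^m)$. This step is purely mechanical.

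The heart of the argument is a lower bound for the anisotropy term $\langle r^m\mat{B}_1(\theta^m)\partial_\rho\vec X^{m+1},\,\partial_\rho(\vec X^{m+1}-\vec X^m)\,|\partial_\rho\vec X^m|^{-1}\rangle$. Since $\mat{B}_1$ is no longer symmetric, the splitting trick \eqref{eq:inequa} is unavailable; instead I would invoke \eqref{eq:defofk1} at each quadrature node $q_j^\pm$ of the mass-lumped inner product, with $\vec w=\partial_\rho\vec X^{m+1}$ and $\vec v=\partial_\rho\vec X^m$. Then $\hat\theta$ is the segment angle $\theta^{m+1}$ of $\Gamma^{m+1}$, while $\theta$ is the segment angle $\theta^m$ of $\Gamma^m$, which is also the argument carried by $\mat{B}_1$; multiplying the resulting pointwise inequality by $r^m\ge 0$ and summing yields $\langle r^m\mat{B}_1(\theta^m)\partial_\rho\vec X^{m+1},\,\partial_\rho(\vec X^{m+1}-\vec X^m)\,|\partial_\rho\vec X^m|^{-1}\rangle \ge \langle r^m\gamma(\theta^{m+1}),\,|\partial_\rho\vec X^{m+1}|\rangle-\langle r^m\gamma(\theta^m),\,|\partial_\rho\vec X^m|\rangle$.

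From here the remaining steps are the same bookkeeping as in the $q=0$ proof: combine the term $\langle\gamma(\theta^{m+1}),(r^{m+1}-r^m)|\partial_\rho\vec X^{m+1}|\rangle$ in \eqref{eqn:qz} with $\langle r^m\gamma(\theta^{m+1}),|\partial_\rho\vec X^{m+1}|\rangle$ to form $\langle r^{m+1}\gamma(\theta^{m+1}),|\partial_\rho\vec X^{m+1}|\rangle$; apply $(r_\bullet^{m+1}+r_\bullet^m)(r_\bullet^{m+1}-r_\bullet^m)=(r_\bullet^{m+1})^2-(r_\bullet^m)^2$ to the $\sigma$-contact-line terms; recognize the discrete free energies $W(\vec X^m)=2\pi\langle r^m\gamma(\theta^m),|\partial_\rho\vec X^m|\rangle-\sigma\pi((r_0^m)^2-(r_L^m)^2)$ and $W(\vec X^{m+1})$; and discard the manifestly nonnegative dissipation terms $2\pi\ttau_m\langle r^m(\partial_\rho\mu^{m+1})^2,|\vec X_\rho^m|^{-1}\rangle$ together with the two squared-difference contact contributions. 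Rearranging gives precisely \eqref{eq:denergyd2}.

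I expect the only delicate point to be the pointwise application of \eqref{eq:defofk1}: one must check that the direction vectors $\partial_\rho\vec X^{m+1}$, $\partial_\rho\vec X^m$ are matched with the correct angles $\theta^{m+1}$, $\theta^m$, and that the sign condition $r^m\ge 0$ is used when passing to the mass-lumped sum. The anti-symmetric part $\underline{\mat{B}}_1(\theta^m)$, which would otherwise need separate treatment — it enters only through $-\langle r^m\underline{\mat{B}}_1(\theta^m)\partial_\rho\vec X^{m+1},\partial_\rho\vec X^m|\partial_\rho\vec X^m|^{-1}\rangle$ because $\vec a^\top\underline{\mat{B}}_1\vec a=0$ — requires no extra care here, since \eqref{eq:defofk1} is already posited for the full matrix $\mat{B}_1$.
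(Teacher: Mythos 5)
Your proposal is correct and follows essentially the same route as the paper: the same test functions $\varphi^h=\ttau_m\mu^{m+1}$, $\vec\psi^h=\vec X^{m+1}-\vec X^m$, the same pointwise application of \eqref{eq:defofk1} with $\vec w=\partial_\rho\vec X^{m+1}$, $\vec v=\partial_\rho\vec X^m$, $\theta=\theta^m$, $\hat\theta=\theta^{m+1}$ (weighted by $r^m\ge 0$), and the same final bookkeeping with the difference-of-squares identity for the $\sigma$-terms. Your closing observation that the anti-symmetric part of $\mat{B}_1$ needs no separate treatment because \eqref{eq:defofk1} is assumed for the full matrix matches the paper's reasoning, which likewise invokes the hypothesis directly without splitting.
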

\begin{proof}
Taking $\varphi ^h=\ttau_m\mu ^{m+1}$ in \eqref{eqn:structure_a} and $\vec\psi ^h=\delta\vec X^{m+1}=(\vec X^{m+1}-\vec X^m)$ in \eqref{eqn:structure_b}, we get 
 \begin{align}\label{eqn:qz2}
&\ttau_m\left \langle r^m(\partial_\rho\mu ^{m+1})^2, \left | \vec X_\rho ^m \right |^{-1} \right \rangle + \left \langle \gamma(\theta ^{m+1}), (r^{m+1}-r^m)\left | \partial_\rho\vec X^{m+1} \right | \right \rangle + 
\left\langle r^m\mat{B}_{1}(\theta ^m)\partial_\rho\vec X^{m+1},  
 \partial_\rho(\vec X^{m+1}-\vec X^m)\left | \partial_\rho\vec X ^m \right |^{-1}\right\rangle \nonumber \\
 &~~~~+\frac{1}{2\eta\ttau_m }\bigg[(r_L^{m+1}+r_L^m)(r_L^{m+1}-r_L^m)^2+(r_0^{m+1}+r_0^m)(r_0^{m+1}-r_0^m)^2\bigg] \nonumber \\
 &~~~~-\frac{\sigma}{2}\bigg[(r_L^{m+1}+r_L^m)(r_L^{m+1}-r_L^m)-(r_0^{m+1}+r_0^m)(r_0^{m+1}-r_0^m)\bigg]=0.  
 \end{align}
From \eqref{eq:defofk1}, we can obtain
\begin{equation}
\frac{1}{\left | \partial_\rho\vec X^m  \right |}\left(\mat{B}_1(\theta ^m)\partial_\rho\vec X^{m+1}\right)\cdot(\partial_\rho\vec X^{m+1}-\partial_\rho\vec X^m)\ge\left | \partial_\rho\vec X^{m+1} \right |\gamma(\theta ^{m+1})-\left | \partial_\rho\vec X^m \right |\gamma(\theta ^m).  
\end{equation}
Then we have 
\begin{align}\label{eqn:zm2}
&\left\langle r^m\mat{B}_{1}(\theta ^m)\partial_\rho\vec X^{m+1},  
 \partial_\rho(\vec X^{m+1}-\vec X^m)\left | \partial_\rho\vec X ^m \right |^{-1}\right\rangle + \frac{1}{2\eta\ttau_m }\bigg[(r_L^{m+1}+r_L^m)(r_L^{m+1}-r_L^m)^2+(r_0^{m+1}+r_0^m)(r_0^{m+1}-r_0^m)^2\bigg] \nonumber \\
 &~~~~-\frac{\sigma}{2}\bigg[(r_L^{m+1}+r_L^m)(r_L^{m+1}-r_L^m)-(r_0^{m+1}+r_0^m)(r_0^{m+1}-r_0^m)\bigg] \nonumber \\
 &~~~~\ge \left\langle r^m, \gamma(\theta ^{m+1})\left | \partial_\rho\vec X ^{m+1} \right |\right\rangle - \left\langle r^m, \gamma(\theta ^m)\left | \partial_\rho\vec X ^m \right |\right\rangle \nonumber \\
 &~~~~+\frac{1}{2\eta\ttau_m }\bigg[(r_L^{m+1}+r_L^m)(r_L^{m+1}-r_L^m)^2+(r_0^{m+1}+r_0^m)(r_0^{m+1}-r_0^m)^2\bigg] + \frac{\sigma}{2}\bigg[(r_0^{m+1})^2-(r_0^m)^2-(r_L^{m+1})^2 + (r_L^m)^2\bigg].
 \end{align}
 Combining \eqref{eqn:qz2} with \eqref{eqn:zm2}, we derive the energy stability  \eqref{eq:denergyd2}.
\end{proof}

\begin{rem}
Introducing auxiliary functions are defined by $P_\alpha(\theta, \hat{\theta}), Q(\theta, \hat{\theta})$ 
\begin{subequations}
\begin{align}
&P_\alpha(\theta, \hat{\theta}):=2\sqrt{(\gamma(\theta)+\alpha(-\sin\hat{\theta}\cos\theta+\cos\hat{\theta}\sin\theta)^2)\gamma(\theta)}, \quad\forall\theta, \hat{\theta}\in[-\pi, \pi], \quad\alpha\ge 0, \\
&Q(\theta, \hat{\theta}):=\gamma(\hat{\theta})+\gamma(\theta)(\sin\theta\sin\hat{\theta}+\cos\theta\cos\hat{\theta})+\gamma'(\theta)(-\sin\hat{\theta}\cos\theta+\cos\hat{\theta}\sin\theta), \quad\forall\theta, \hat{\theta}\in[-\pi, \pi], 
\end{align}
\end{subequations}
then we define the minimal stabilizing function $\mathscr S_0(\theta)$ as 
\begin{equation}
\mathscr S_0(\theta):=inf\left \{ \alpha\ge 0:P_\alpha(\theta, \hat{\theta})-Q(\theta, \hat{\theta})\ge 0, \forall\hat{\theta}\in[-\pi, \pi] \right \}, \quad\theta\in[-\pi, \pi]. 
\end{equation}
The inequality \eqref{eq:defofk1} can be satisfied if $(-\sin\theta, \cos\theta)^\top=-\frac{\vec v}{\left | \vec v \right |}$, $(-\sin\hat{\theta}, \cos\hat{\theta})^\top=-\frac{\vec w}{\left | \vec w \right |}$ are nonzero vectors, $\gamma(\theta)$ satisfies 
$3\gamma(\theta)>\gamma(\pi + \theta)$ and stabilizing function satisfies $\mathscr S(\theta)\ge\mathscr S_0(\theta)$ for $\theta\in[-\pi, \pi]$ (see \cite{bao2024structure}). 
\end{rem}

\section{Approximations with improved mesh quality}\label{sec5}

Reviewing \eqref{eqn:vaf_2}, we propose a new variational formulation, which can improve the mesh quality in the context of discretization. To this end, we introduce
\begin{equation}\label{eqn:lam}
\mu=\kappa-\lambda\quad\text{with}\quad\lambda=\frac{(\gamma(\theta)\vec n-\gamma'(\theta)\vec \tau)\cdot\vec e_1}{r}. 
\end{equation}
Given an initial open curve $\Gamma(0):=\vec X(\cdot, 0)\in\mathbb{X}$, another variational formulation is to find open curves $\Gamma(t):=\vec X(\cdot,t)\in\mathbb{X}$, and $\mu(\cdot,t)\in H^1(\mathbb{I})$, such that
\begin{subequations}\label{eqn:vafo}
\begin{align}
&\left\langle r\partial_t\vec X\cdot\vec n,  
 \varphi\left | \partial_\rho\vec X \right |\right\rangle-\left\langle r\partial_\rho\mu, \partial_\rho\varphi\left | \partial_\rho\vec X \right |^{-1}\right\rangle=0, \quad\forall\varphi\in H^1(\mathbb{I}), \label{eqn:vafo_1} \\
&\left\langle\mu+\lambda, \vec n\cdot\vec \psi\left | \partial_\rho\vec X \right |\right\rangle + 
\left\langle \mat{B}_{q}(\theta)\partial_\rho\vec X,  
 \partial_\rho\vec\psi\left | \partial_\rho\vec X \right |^{-1}\right\rangle \nonumber \\
&~~~~+\frac{1}{\eta }\bigg[\partial_t r(L, t)\psi_1(1)+ \partial_t r(0,  t)\psi_1(0)\bigg]-\sigma\bigg[\psi_1(1)-\psi_1(0)\bigg]=0, \quad
\forall\vec\psi=(\psi_1, \psi_2)^\top\in\mathbb{X} \label{eqn:vafo_2}.
\end{align}
\end{subequations}
Similarly, we can also obtain the volume conservation and energy decay laws for the variational formulation \eqref{eqn:vafo}.

Then, we introduce a linear discretized scheme based on variational formulation \eqref{eqn:vafo_1}. Similar as \cite{Barrett19,li2024parametric}, for  $\forall\rho_0=0$, there holds  
\begin{align}
\lim_{\rho \to \rho _0}\lambda(\rho, t)&=\lim_{\rho \to \rho _0}\frac{(\gamma(\theta)\vec n-\gamma'(\theta)\vec \tau)\cdot \vec e_1}{r}=\lim_{\rho \to \rho _0}\frac{\partial_\rho\left(\gamma(\theta)\vec n-\gamma'(\theta)\vec \tau\right)\cdot \vec e_1}{\partial_\rho r} \nonumber \\
&=\partial_s\left(\gamma(\theta)\vec n-\gamma'(\theta)\vec \tau\right)|_{\rho=\rho _0}\cdot\vec\tau(\rho _0, t)=-\kappa(\rho _0, t),\qquad t\in[0,T]. \nonumber 
\end{align}
To avoid the degeneracy in the discretization on $\rho=0$, on recalling \eqref{eqn:lam} we define 
\begin{equation}
[\lambda ^{m+\frac{1}{2}}(\mu ^{m+1})](q_j)=\left\{\begin{matrix}
 -\frac{1}{2}\mu ^{m+1}(q_j) ,& q_j=0,\\
 &\\
 \frac{\vec\omega ^{m}(q_j)\cdot \vec e_1}{\vec X^m(q_j)\cdot\vec e_1}, & \text{otherwise}, 
\end{matrix}\right.\qquad \text{for}\quad \vec\omega ^m\in[\mathbb{K}^h]^2.
\end{equation}
Then we can obtain the \textbf{linear approximation}. For $\Gamma(0):=\vec X(\cdot, 0)\in\mathbb{X}^h$, find $(\delta\vec X^{m+1}, \mu ^{m+1})\in\mathbb{X}^h\times\mathbb{K}^h$ with $\vec X^{m+1}=\vec X^m+\delta\vec X^{m+1}$, such that
\begin{subequations}
\label{eqn:structurexian}
\begin{align}
&\frac{1}{\ttau_m}\left \langle r^m(\vec X^{m+1}-\vec X^m), \varphi ^h\vec n^m\left | \partial_\rho\vec X ^m \right | \right \rangle - \left \langle r^m\partial_\rho\mu ^{m+1}, \partial_\rho\varphi^h\left | \partial_\rho\vec X ^m \right |^{-1} \right \rangle=0, \qquad\forall\varphi\in\mathbb{K}^h, 
\label{eqn:structure_axian}
\\
&\left \langle \mu ^{m+1}+\lambda ^{m+\frac{1}{2}}, \vec n^m\cdot\vec\psi ^h \left | \partial_\rho\vec X^m \right | \right \rangle + 
\left\langle \mat{B}_{q}(\theta ^m)\partial_\rho\vec X^{m+1},  
 \partial_\rho\vec\psi ^h\left | \partial_\rho\vec X ^m \right |^{-1}\right\rangle \nonumber \\
 &~~~~+\frac{1}{\eta\ttau }\bigg[(r^{m+1}_L-r^m_L)\psi_1 ^h(1)+ (r^{m+1}_0-r^m_0)\psi_1 ^h(0)\bigg]-\sigma\bigg[\psi_1 ^h(1)-\psi_1 ^h(0)\bigg]=0, \quad
\forall\vec\psi=(\psi_1, \psi_2)^\top\in\mathbb{X}^h. 
\label{eqn:structure_bxian}
\end{align}
\end{subequations}
The scheme \eqref{eqn:structurexian} it can improve mesh quality efficiently. But properties of volume conservation and energy stability properties cannot be theoretically proofed. We next consider a \textbf{volume-preserving approximation}. For $\Gamma(0):=\vec X(\cdot, 0)\in\mathbb{X}^h$, find $(\delta\vec X^{m+1}, \mu ^{m+1})\in\mathbb{X}^h\times\mathbb{K}^h$ with $\vec X^{m+1}=\vec X^m+\delta\vec X^{m+1}$, such that
\begin{subequations}
\label{eqn:structurefxian}
\begin{align}
&\frac{1}{\ttau_m}\left \langle \vec X^{m+1}-\vec X^m, \vec f^{m+\frac{1}{2}}\varphi^h\right \rangle - \left \langle r^m\partial_\rho\mu ^{m+1}, \partial_\rho\varphi^h\left | \partial_\rho\vec X ^m \right |^{-1} \right \rangle=0, \qquad\forall\varphi\in\mathbb{K}^h,
\label{eqn:structure_afxian}
\\
&\left \langle \mu ^{m+1}+\lambda ^{m+\frac{1}{2}}, \vec n^m\cdot\vec\psi ^h \left | \partial_\rho\vec X^m \right | \right \rangle + 
\left\langle \mat{B}_{q}(\theta ^m)\partial_\rho\vec X^{m+1},  
 \partial_\rho\vec\psi ^h\left | \partial_\rho\vec X ^m \right |^{-1}\right\rangle \nonumber \\
 &~~~~+\frac{1}{\eta\ttau }\bigg[(r^{m+1}_L-r^m_L)\psi_1 ^h(1)+ (r^{m+1}_0-r^m_0)\psi_1 ^h(0)\bigg]-\sigma\bigg[\psi_1 ^h(1)-\psi_1 ^h(0)\bigg]=0, \quad
\forall\vec\psi=(\psi_1, \psi_2)^\top\in\mathbb{X}^h. 
\label{eqn:structure_bfxian}
\end{align}
\end{subequations}
For the scheme \eqref{eqn:structurefxian}, volume conservation can be satisfied by choosing $\varphi^h=\ttau_m$ in \eqref{eqn:structure_afxian}. Although, as the scheme \eqref{eqn:structurexian} energy stability property cannot be proved in theory, the mesh quality remains nice. 

\section{Numerical results}\label{sec6}

In this section, we will present some experimental test numerical schemes and simulate SSD with axisymmetric geometry. We denote the schemes \eqref{eqn:structurexian}, \eqref{eqn:structurefxian} and \eqref{eqn:structure} as $\mathbf{L}$-method, $\mathbf{V}$-method and $\mathbf{P}$-method for brevity. We employ uniform time step size with $\ttau_m=\ttau=\frac{T}{M}$ for $m=0,\dots,M-1$. In order to better observe the effects of these methods, we introduce the volume loss function 
\begin{equation}
\left.\Delta V(t)\right|_{t=t_m}= \frac{\vol(\vec X^m)-\vol(\vec X^0)}{\vol(\vec X^0)},\qquad m\geq 0,\nn
\end{equation}
where $\vol(\vec X^m)$ is denoted by 
\begin{equation}
\vol(\vec X^m)=\pi\left \langle (\vec X^m \cdot\vec e_1)^2\vec n^m, \vec e_1\left | \partial_\rho\vec X^m \right | \right \rangle. \nonumber
\end{equation}
We test convergence by quantifying the difference between axisymmetric surfaces enclosed by curves $\Gamma_1$ and $\Gamma_2$. Therefore, we adopt the manifold distance in as 
\begin{equation}
\text{Md}(\Gamma_1, \Gamma_2):=\left |(\Omega_1\backslash\Omega_2)\cup(\Omega_2\backslash\Omega_1) \right |=\left |\Omega_1 \right |+\left |\Omega_2 \right |-2\left |\Omega_1\cap\Omega_2 \right |, \nonumber
\end{equation}
where $\Omega_i$ represents the region enclosed by $\Gamma_i$, and $| \cdot |$ denotes the area of region. Let $\vec X^m$ denote numerical approximation of surface with mesh size $h$ and time step $\ttau$, then introduce approximate solution between interval $[t_m, t_{m+1}]$ as
\begin{equation}
\vec X_{h, \ttau}(\rho, t)=\frac{t-t_m}{\ttau}\vec X^m(\rho)+\frac{t_m-t}{\ttau}\vec X^{m+1}(\rho), \quad\rho\in\mathbb{I}. 
\end{equation}
Then we define the errors by  
\begin{equation}
e_{h, \ttau}(t)=\text{Md}(\Gamma_{h, \ttau}, \Gamma_{\frac{h}{2}, \frac{\ttau}{4}}), \quad\tilde{e}_{h, \ttau}(t)=\text{Md}(\Gamma_{h, \ttau}, \Gamma_{\frac{h}{2}, \ttau}). 
\end{equation}

\textbf{Example 1}:
We test the errors and convergence rate of $\mathbf{P}$-method with respect to different types of anisotropy function $\gamma(\theta)$, including the two cases in this example:
\begin{itemize}
    \item 4-fold anisotropy: $\gamma(\theta)=1+\beta\cos(4\theta)$;
    \item 3-fold anisotropy: $\gamma(\theta)=1+\beta\cos(3\theta)$.
\end{itemize}
For the 4-fold anisotropy, we adopt $\mathbf{P}$-method with 
the surface energy matrix $\mat{B}_0(\theta)$, while for the 3-fold anisotropy, we adopt $\mathbf{P}$-method with $\mat{B}_1(\theta)$.
We choose the semi-ellipse rotation as the initial shape, with the major axis $4$ and minor axis $2$. 
The numerical errors and orders of the $\mathbf{P}$-method with 4-fold and 3-fold anisotropies are shown in Table \ref{Table:1} and Table \ref{Table:2}, respectively. 
We can observe that the $\mathbf{P}$-method has a good numerical approximation for both cases. Furthermore, we plot the volume loss and energy ratio $E(t)/E(0)$ of $\mathbf{P}$-method in  Figures \ref{fig:1}-\ref{fig:2}, which are accordant with our theoretical analysis.   
\begin{table}[!htp]
\centering
\def\temptablewidth{0.75\textwidth}
\vspace{-10pt}
\caption{Errors and convergence rate of the numerical solution for the interface using the $\mathbf{P}$-method with respect to matrix $\mat{B}_0(\theta)$, where $\beta=0, 0.05, 0.07$, and $\ttau_0=3/5$, $h_0=1/10$.}
{\rule{\temptablewidth}{1pt}}
\begin{tabular*}{\temptablewidth}{@{\extracolsep{\fill}}c|cc|cc|cc}
 & \multicolumn{2}{c|}{$\beta=0$} & \multicolumn{2}{c|}{$\beta=0.05$} &\multicolumn{2}{c}{$\beta=0.07$} \\ \hline
$(h,\ \ttau)$  
&$e_{h,\ttau} $ & order &$e_{h,\ttau}$ & order &$e_{h,\ttau} $ & order  \\ \hline
$(h_0, \ttau_0)$  
& 1.1881E-1 & -    & 1.5001E-1 &-&   2.0507E-1 &- \\ \hline
$(\frac{h_0}{2}, \frac{\ttau_0}{2^2})$ 
& 1.9395E-2 & 2.6149 & 2.3318E-2 & 2.6855 &3.2324E-2 & 2.6655 \\ \hline
$(\frac{h_0}{2^2}, \frac{\ttau_0}{2^4})$ 
& 3.7954E-3 & 2.3533 & 4.7194E-3 & 2.3048 & 8.3649E-3 &1.9502 
 \\ \hline
$(\frac{h_0}{2^3}, \frac{\ttau_0}{2^6})$ 
& 8.5268E-4 & 2.1542 & 1.0602E-3 & 2.1542 & 1.9785E-3 &2.0799 
 \end{tabular*}
 {\rule{\temptablewidth}{1pt}}
\label{Table:1}
\end{table}

\begin{table}[!htp]
\centering
\def\temptablewidth{0.75\textwidth}
\vspace{-10pt}
\caption{Errors and convergence rate of the numerical solution for the interface using the $\mathbf{P}$-method with respect to matrix $\mat{B}_1(\theta)$, where $\beta=0, 0.05, 0.2$, and $\ttau_0=3/5$, $h_0=1/10$.}
{\rule{\temptablewidth}{1pt}}
\begin{tabular*}{\temptablewidth}{@{\extracolsep{\fill}}c|cc|cc|cc}
 & \multicolumn{2}{c|}{$\beta=0$} & \multicolumn{2}{c|}{$\beta=0.05$} &\multicolumn{2}{c}{$\beta=0.2$} \\ \hline
$(h,\ \ttau)$  
&$e_{h,\ttau} $ & order &$e_{h,\ttau}$ & order &$e_{h,\ttau} $ & order  \\ \hline
$(h_0, \ttau_0)$  
& 1.7833E-1 & -    & 1.6144E-1 &-&   1.7970E-1 &- \\ \hline
$(\frac{h_0}{2}, \frac{\ttau_0}{2^2})$ 
& 3.6075E-2 & 2.3055 & 3.8652E-2 & 2.0624 &5.7118E-2 & 1.6536 \\ \hline
$(\frac{h_0}{2^2}, \frac{\ttau_0}{2^4})$ 
& 6.1600E-3 & 2.5500 & 7.0908E-3 & 2.4465 & 9.8366E-3 &2.5377 
 \\ \hline
$(\frac{h_0}{2^3}, \frac{\ttau_0}{2^6})$ 
& 1.4568E-4 & 2.0801 & 1.5961E-3 & 2.1514 & 2.0636E-3 &2.2530 
 \end{tabular*}
 {\rule{\temptablewidth}{1pt}}
\label{Table:2}
\end{table}

\begin{figure}[!htp]
\centering
\includegraphics[width=0.45\textwidth]{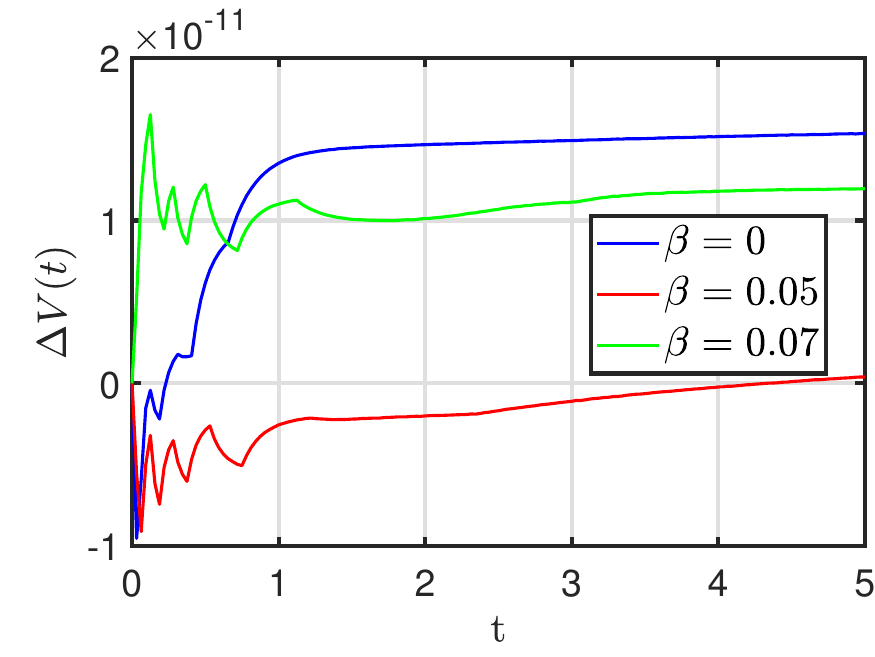}\hspace{0.5cm}
\includegraphics[width=0.45\textwidth]{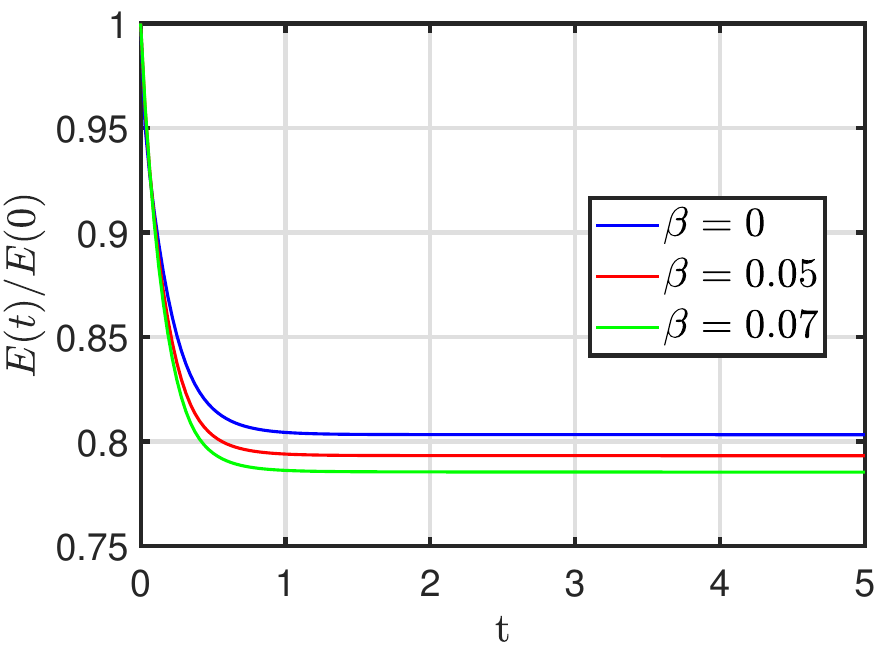}
\caption{The time history of  the relative volume loss $\Delta V(t)$ and the energy $E(t)/E(0)$ using the $\mathbf{P}$-method with respect to matrix $\mat{B}_0(\theta)$, where $h=1/80$, $\ttau=1/160$, and $\beta=0, 0.05, 0.07$. }
\label{fig:1}
\end{figure}

\begin{figure}[!htp]
\centering
\includegraphics[width=0.45\textwidth]{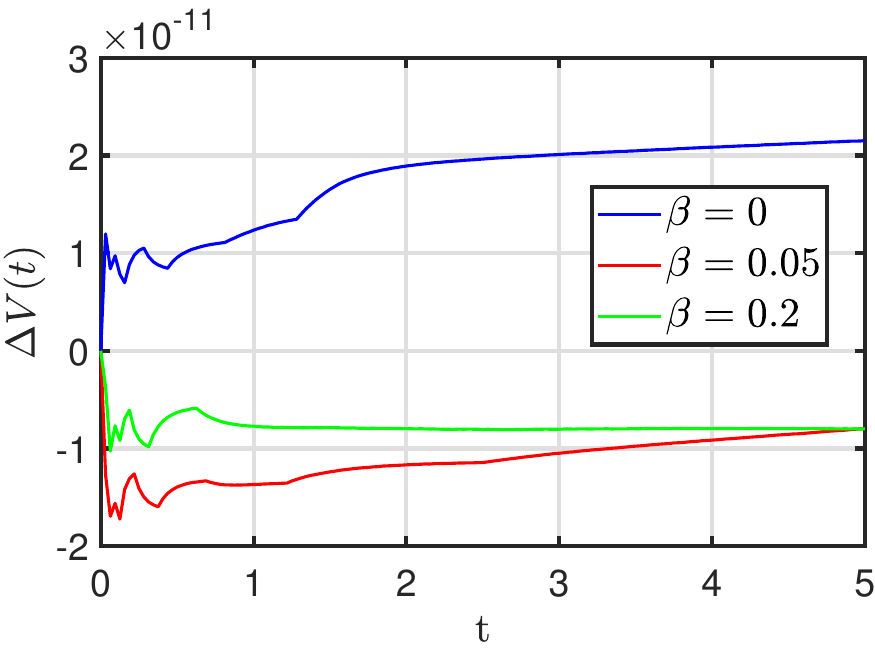}\hspace{0.5cm}
\includegraphics[width=0.45\textwidth]{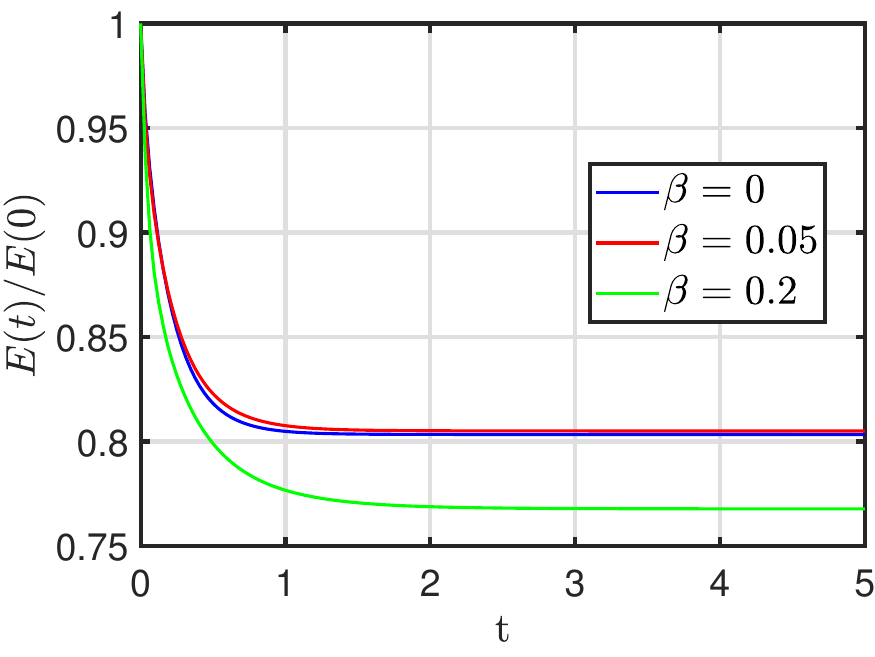}
\caption{The time history of  the relative volume loss $\Delta V(t)$  and the energy $E(t)/E(0)$ using the $\mathbf{P}$-method with respect to matrix $\mat{B}_1(\theta)$, where $h=1/80$, $\ttau=1/160$, and $\beta=0, 0.05, 0.2$. }
\label{fig:2}
\end{figure}

\textbf{Example 2}:
In this example, we compare the mesh quality of the three numerical schemes with two types of anisotropic surface energy matrices. To this end, we define the mesh ratio at $t_m$ by 
\begin{align*}
    R^h(t_m) := \frac{\max_{1\leq j\leq N} |\vec X_j^m-\vec X_{j-1}^m|}{\min_{1\leq j\leq N}|\vec X_j^m-\vec X_{j-1}^m|},\qquad m>0.
\end{align*}
We choose the same initial value as Example 1. 
Figure \ref{fig:exa2_fig1} depicts time evolution of the mesh ratio for the $\mathbf{L}$-method, $\mathbf{V}$-method and $\mathbf{P}$-method, with 4-fold anisotropy: $\gamma(\theta)=1+\beta\cos(4\theta)$. It can be clearly observed from Figure \ref{fig:exa2_fig1} that
\begin{itemize}
    \item for the weakly anisotropic case with the parameter $\beta=0.05$,  
    $\mathbf{L}$-method, $\mathbf{V}$-method and $\mathbf{P}$-method maintain good meshes, as the three mesh ratio curves approach to a same constant $C\approx 2.4$;
    \item for the strongly anisotropic case with the parameter $\beta=0.3$, 
    the mesh ratio curves of the $\mathbf{L}$-method and $\mathbf{V}$-method approach to a same constant $C\approx 20$, and the mesh ratio curve of the $\mathbf{P}$-method approaches to a relative bigger constant $C\approx 60$.
\end{itemize}
The tests above indicate that the mesh quality remains largely consistent for weakly anisotropic cases across the $\mathbf{L}$-method, $\mathbf{V}$-method, and $\mathbf{P}$-method. 
However, for strongly anisotropic cases, the mesh quality of the $\mathbf{L}$-method and $\mathbf{V}$-method is superior to that of the $\mathbf{V}$-method. However, even in strongly anisotropic cases, we can also affirm that the $\mathbf{V}$-method still maintains relatively good mesh quality. The similar tests are also given for the 3-fold anisotropy: $\gamma(\theta)=1+\beta\cos(3\theta)$, see Figure \ref{fig:exa2_fig2}.

\begin{figure}[!htp]
\centering
\includegraphics[width=0.45\textwidth]{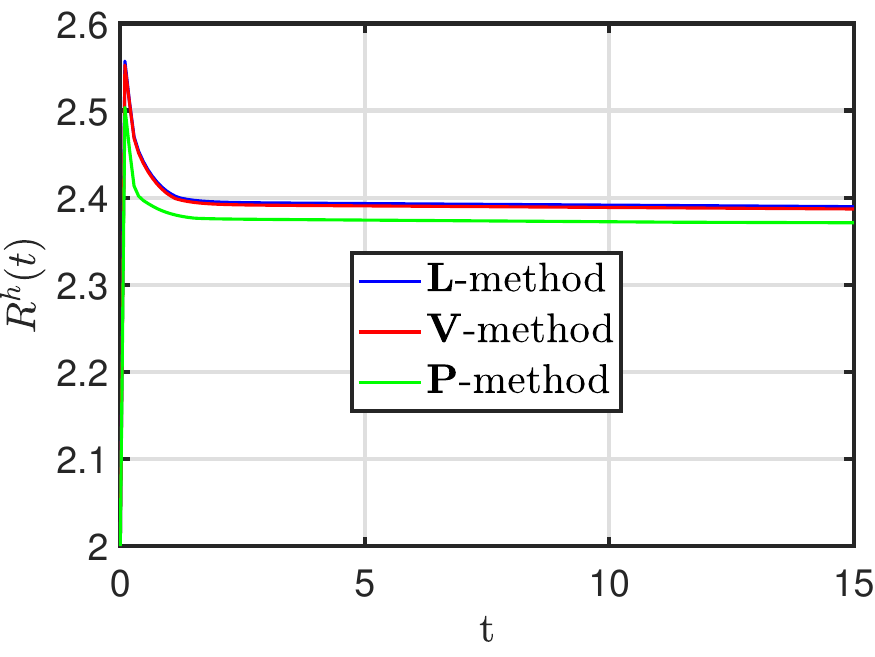}\hspace{0.5cm}
\includegraphics[width=0.45\textwidth]{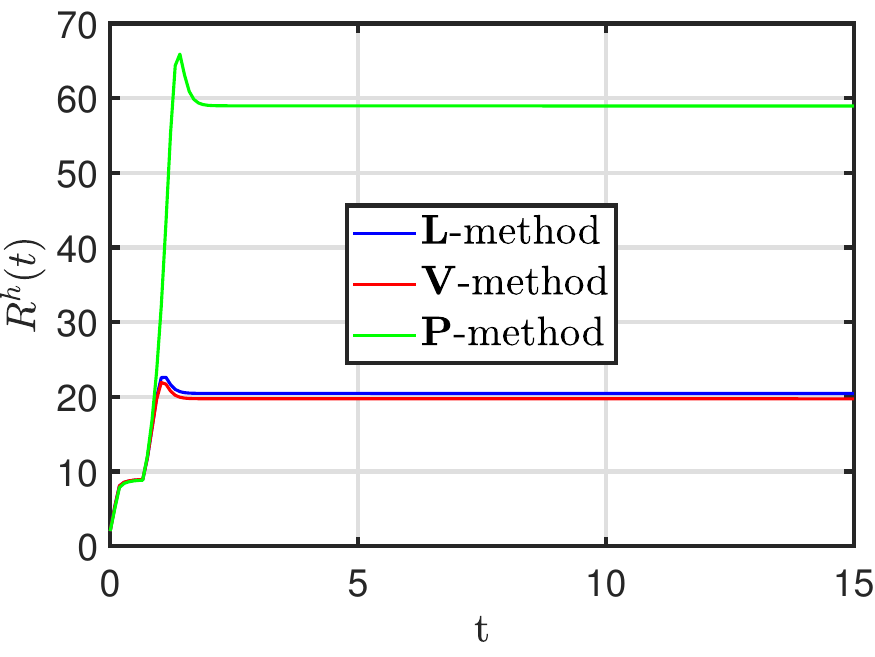}\hspace{0.5cm}
\caption{Time evolution of the mesh ratio 
$R^h(t)$ for the case of 4-fold $\gamma(\theta)=1+\beta\cos(4\theta)$: (i) weak anisotropy with $\beta=0.05$; (ii) strong anisotropy with $\beta=0.3$. 
We select $h=1/160$, $\ttau=1/160$, $\sigma=-0.6$, $\eta=100$ in this test, and adopt the surface energy matrix $\mat{B}_0(\theta)$.}
\label{fig:exa2_fig1}
\end{figure}

\begin{figure}[!htp]
\centering
\includegraphics[width=0.45\textwidth]{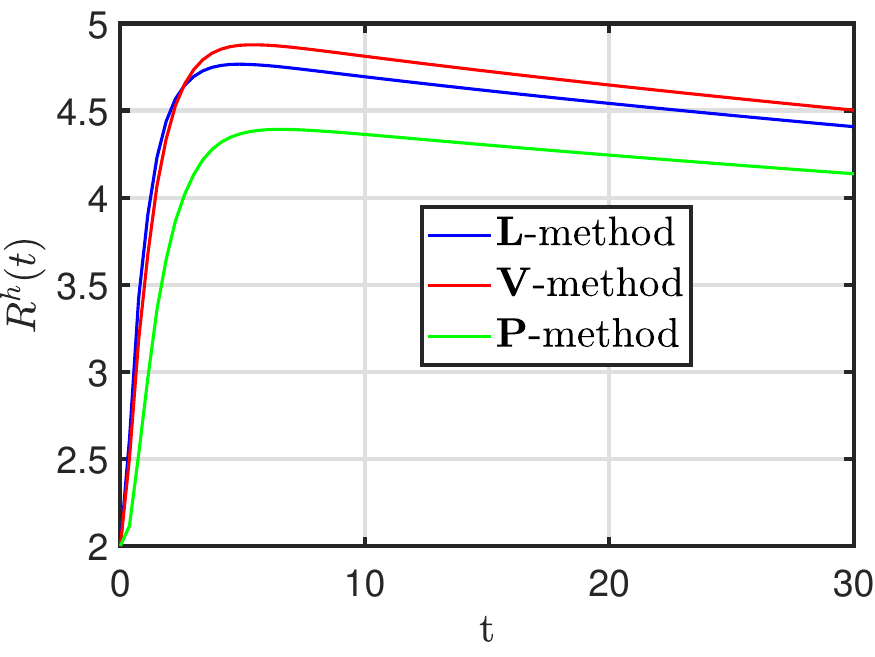}\hspace{0.5cm}
\includegraphics[width=0.45\textwidth]{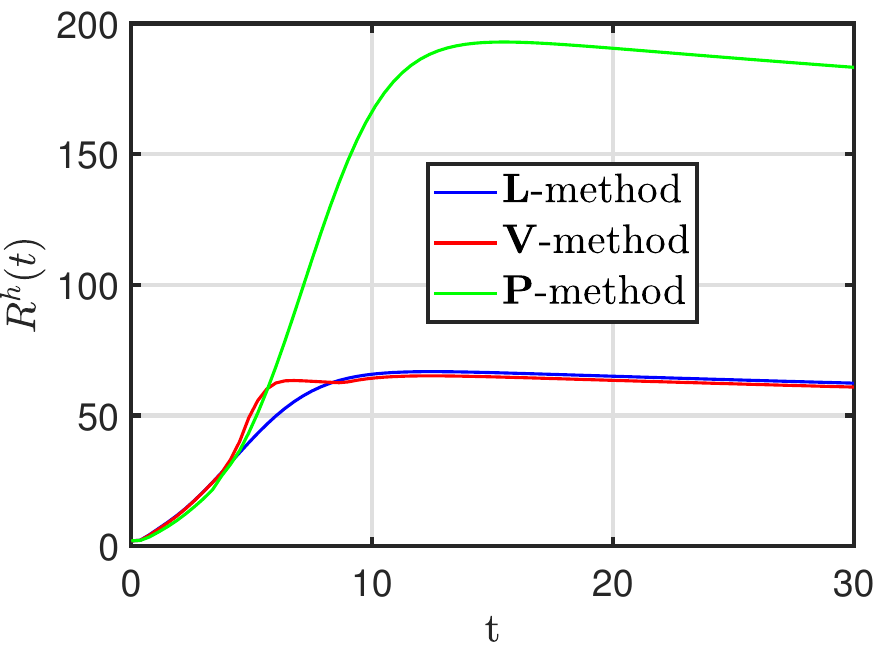}\hspace{0.5cm}
\caption{Time evolution of the mesh ratio 
$R^h(t)$ for the case of 3-fold $\gamma(\theta)=1+\beta\cos(3\theta)$: (i) weak anisotropy with $\beta=0.06$; (ii) strong anisotropy with $\beta=0.3$. 
We select 
$h=1/80$, $\ttau=1/80$, $\sigma=0.6$, $\eta=100$ in this test, and adopt the surface energy matrix $\mat{B}_1(\theta)$.}
\label{fig:exa2_fig2}
\end{figure}

We further test the volume conservation and energy stability of the $\mathbf{L}$-method, $\mathbf{V}$-method and $\mathbf{P}$-method. 
As illustrated in the left figure of Figure \ref{fig:exa2_fig3} with respect to the 4-fold anisotropy: $\gamma(\theta)=1+\beta\cos(4\theta)$, it can be found that  
$\mathbf{L}$-method has volume loss while the other two methods conserve volume as expected. 
In addition, observed from the right figure of Figure \ref{fig:exa2_fig3}, we notice that all three methods maintain energy stability. 
However, the energy decrease for the $\mathbf{L}$-method surpasses that of the other two schemes, possibly due to its volume loss. Similar test results can be observed in Figure \ref{fig:exa2_fig4}, considering the 3-fold anisotropy: $\gamma(\theta)=1+\beta\cos(3\theta)$.

\begin{figure}[!htp]
\centering
\includegraphics[width=0.45\textwidth]{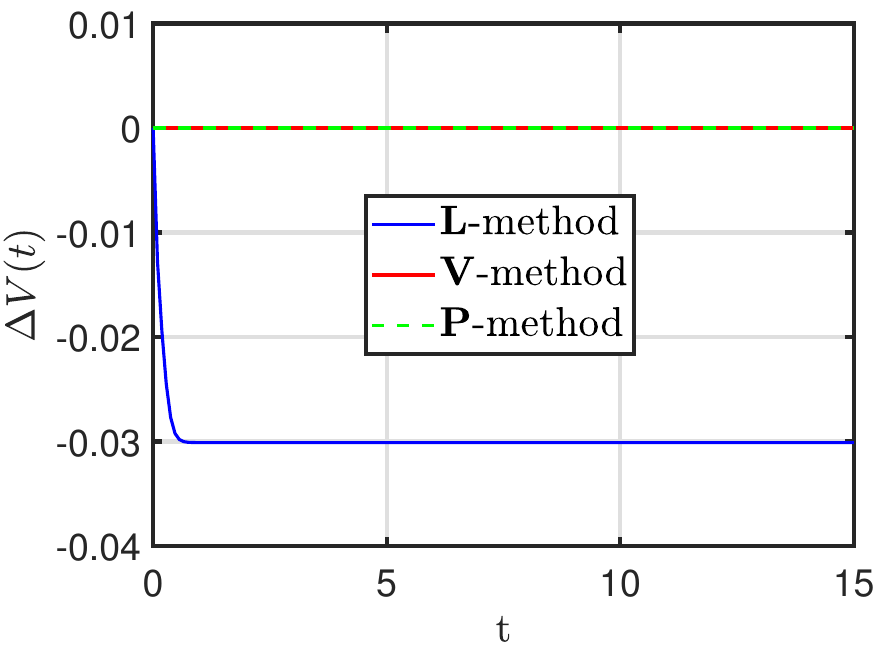}\hspace{0.5cm}
\includegraphics[width=0.45\textwidth]{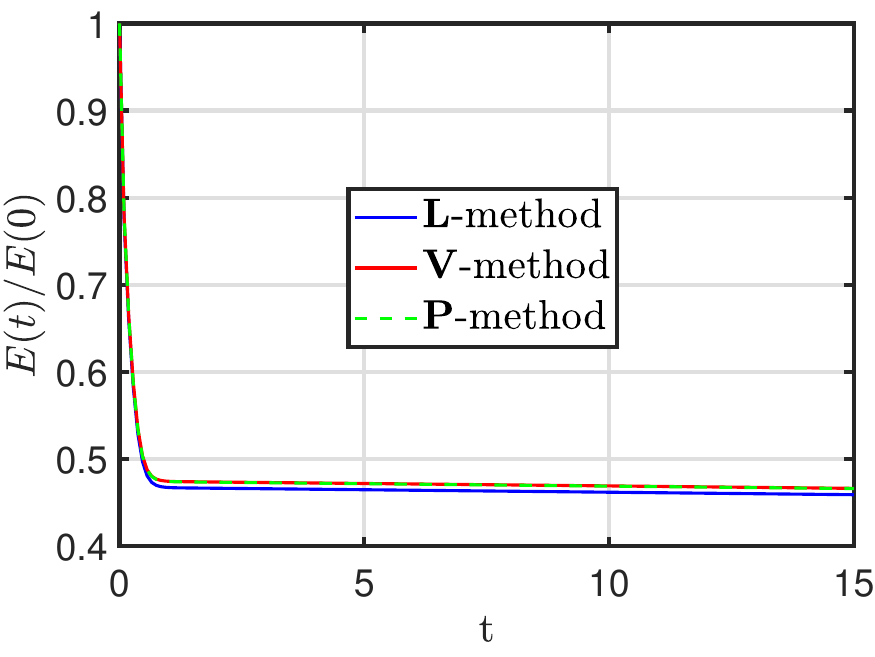}
\caption{The time evolution of the relative volume loss $\Delta V(t)$ and the energy ratio $E(t)/E(0)$ for the case of 4-fold anisotropy: $\gamma(\theta)=1+\beta\cos(4\theta)$.
We select $h=1/160$, $\ttau=1/160$, $\sigma=-0.6$, $\eta=100$, $\beta=0.3$ in this test, and adopt the surface energy matrix $\mat{B}_0(\theta)$.}
\label{fig:exa2_fig3}
\end{figure}

\begin{figure}[!htp]
\centering
\includegraphics[width=0.45\textwidth]{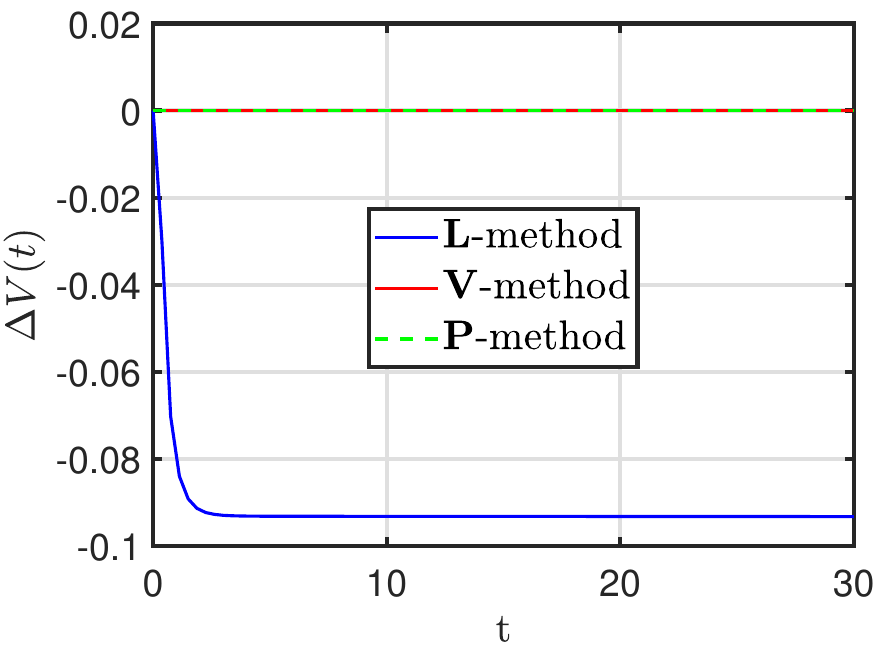}\hspace{0.5cm}
\includegraphics[width=0.45\textwidth]{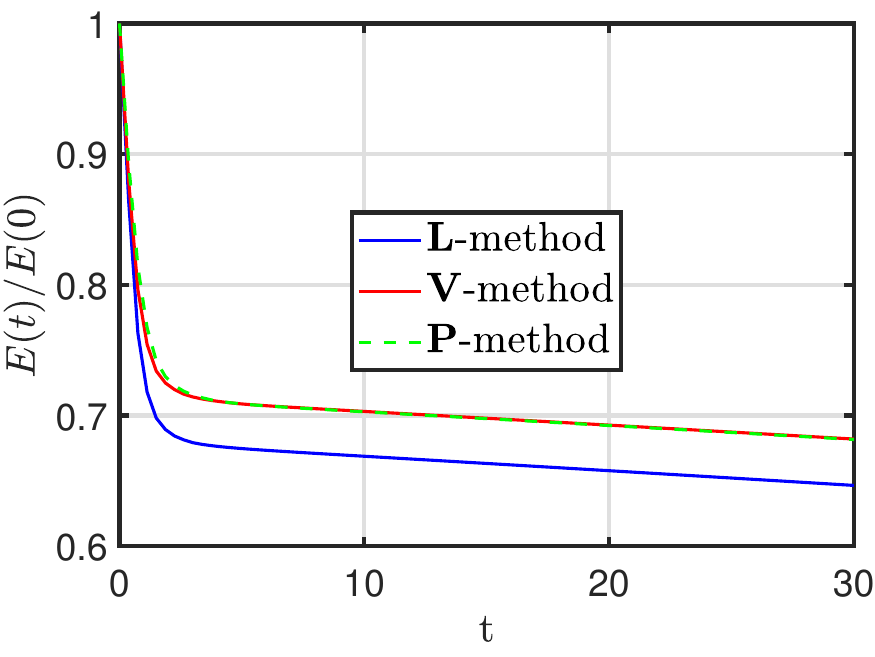}
\caption{
The time evolution of the relative volume loss $\Delta V(t)$ and the energy ratio $E(t)/E(0)$ 
 for the case of 3-fold anisotropy: $\gamma(\theta)=1+\beta\cos(3\theta)$.
We select $h=1/80$, $\ttau=1/80$, $\sigma=0.6$, $\eta=100$, $\beta=0.3$ in this test, and adopt the surface energy matrix $\mat{B}_1(\theta)$.}
\label{fig:exa2_fig4}
\end{figure}

\textbf{Example 3}:
In this example, we study the evolution of films with 'BGN' anisotropy. For 'BGN' anistropy, the surface energy matrix $\mat{B}_0(\theta)$ is given as 
\begin{equation}
\mat{B}_0(\theta)=\sum_{l}^{L}\gamma_l (\theta)^{-1}\mat{J}^{\top}\mat{G_l}\mat{J}. 
\end{equation}
In particular, we choose $L=2$ and $\mat{G_1}$, $\mat{G_2}$, $\mat{J}$ and $\gamma_l(\theta)$ ($l=1,2$) are denoted as 
\begin{equation*}
\mat{G_1}=\begin{pmatrix}
 1 & 0\\
 1 & \varepsilon ^2
\end{pmatrix}, \quad\mat{G_2}=\begin{pmatrix}
 \varepsilon ^2 & 0\\
 0 & 1
\end{pmatrix}, \quad\mat{J}=\begin{pmatrix}
 0 & 1\\
 -1 & 0
\end{pmatrix}, \quad\gamma_1(\theta)=\sqrt{\sin{\theta}^2+\varepsilon ^2\cos{\theta}^2}, \quad\gamma_2(\theta)=\sqrt{\varepsilon ^2\sin{\theta}^2+\cos{\theta}^2},  \nonumber
\end{equation*}
which the anisotropy can be represented as $\gamma(\theta)=\gamma_1(\theta)+\gamma_2(\theta)$. We choose the semi-ellipse rotation as the initial shape, with the major axis $0.66$ and minor axis $1$. The results of simulation are plotted in Figures \ref{fig:exa3_fig1} - \ref{fig:exa3_fig2}. Throughout the result, we can find energy-dissipative and volume-conservative properties during the evolution, and as the evolution of thin films the holes become smaller and smaller.
\begin{figure}[!htp]
\centering
\includegraphics[width=0.6\textwidth]{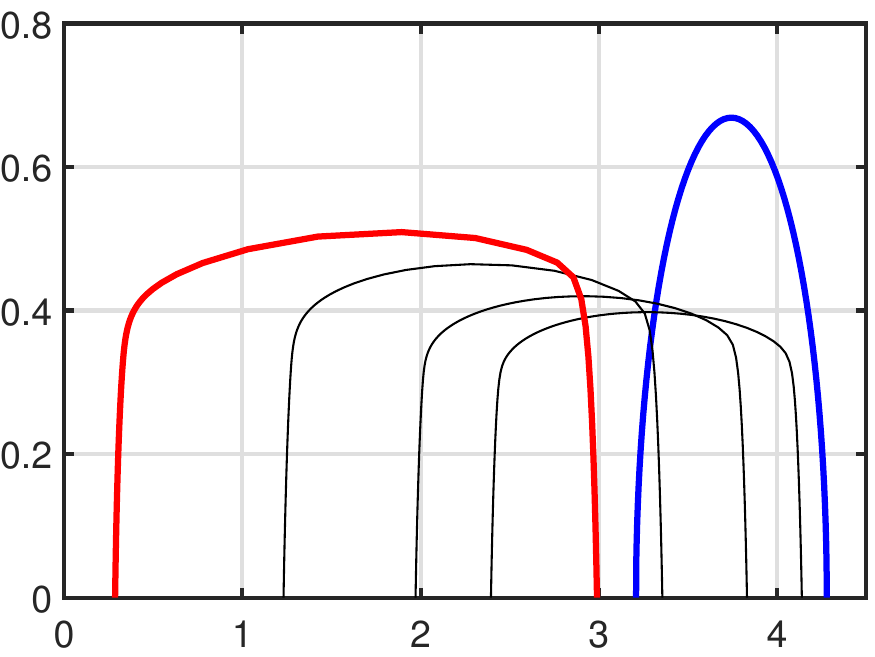}\qquad
\includegraphics[width=0.9\textwidth]{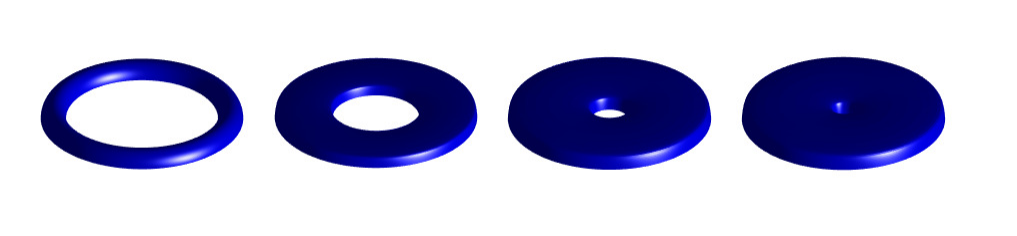}
\caption{ On the upper panel we show the generating curves $\Gamma^m$ at $t=0, 0.45, 1.05, 1.4, 1.5$. On the lower panel, we show the corresponding axisymmetric surfaces $S^m$ generated by $\Gamma^m$. Here 
$h=1/80$, $\ttau=1/100$, $\sigma=-0.6$.} 
\label{fig:exa3_fig1}
\end{figure}

\begin{figure}[!htp]
\centering
\hspace{0.5cm}
\includegraphics[width=0.45\textwidth]{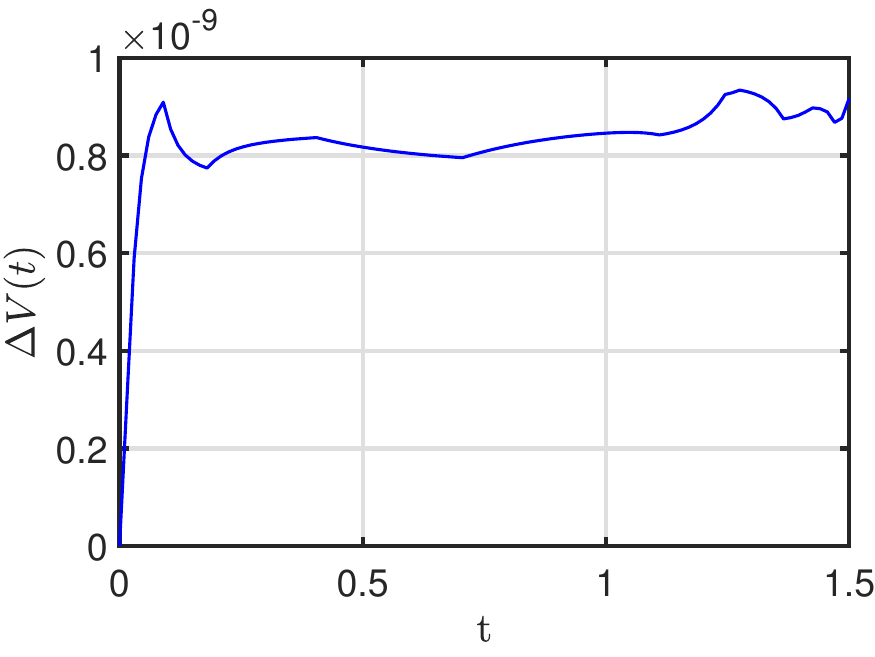}\hspace{0.5cm}
\includegraphics[width=0.45\textwidth]{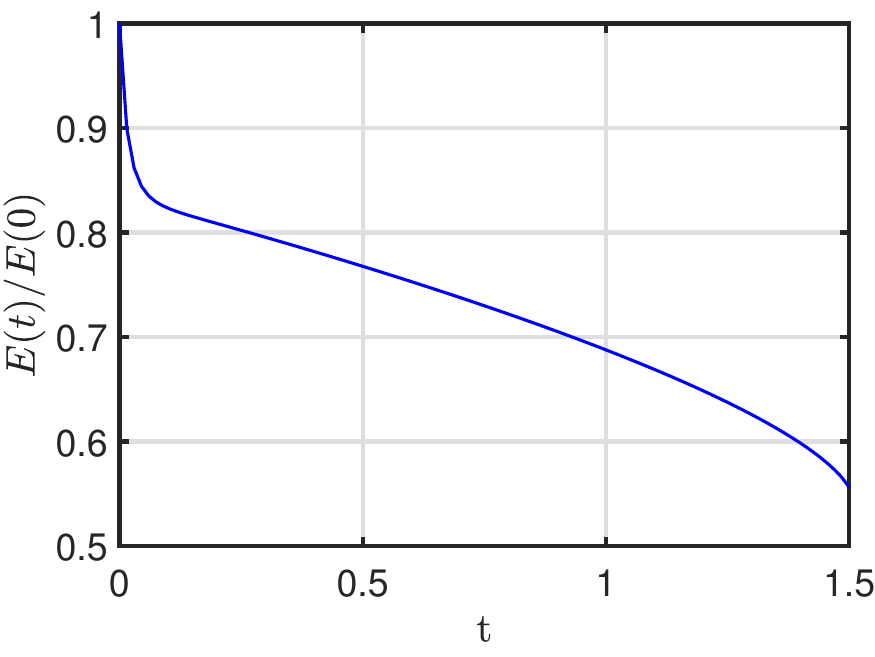}
\caption{ The time evolution of the relative volume loss $\Delta V(t)$ and the energy ratio $E(t)/E(0)$ for 'BGN' anisotropy.Here 
$h=1/80$, $\ttau=1/100$, $\sigma=-0.6$.} 
\label{fig:exa3_fig2}
\end{figure}
\textbf{Example 4}:
In this example, we consider 
the influence of different parameters for 
the evolution of films with 5-fold anisotropy: $\gamma(\theta)=1+\beta\cos(5\theta)$. We make the following two tests:
\begin{itemize}
    \item With different anisotropic parameter $\beta$, we plot the time evolution of the energy ratio, and the axisymmetric surfaces of several special moments in Figure \ref{fig:exa4_fig1}.
    \item We study the equilibrium shapes of the evolution with different values of the parameter $\sigma$. As illustrated in Figure \ref{fig:exa5_fig1}, the material constant $\sigma$ greatly influences on the equilibrium shapes. In addition, the time evolution of the relative volume loss and the energy ratio are depicted in Figure \ref{fig:exa5_fig2}.
\end{itemize}

\begin{figure}[!htp]
\centering
\includegraphics[width=0.3\textwidth]{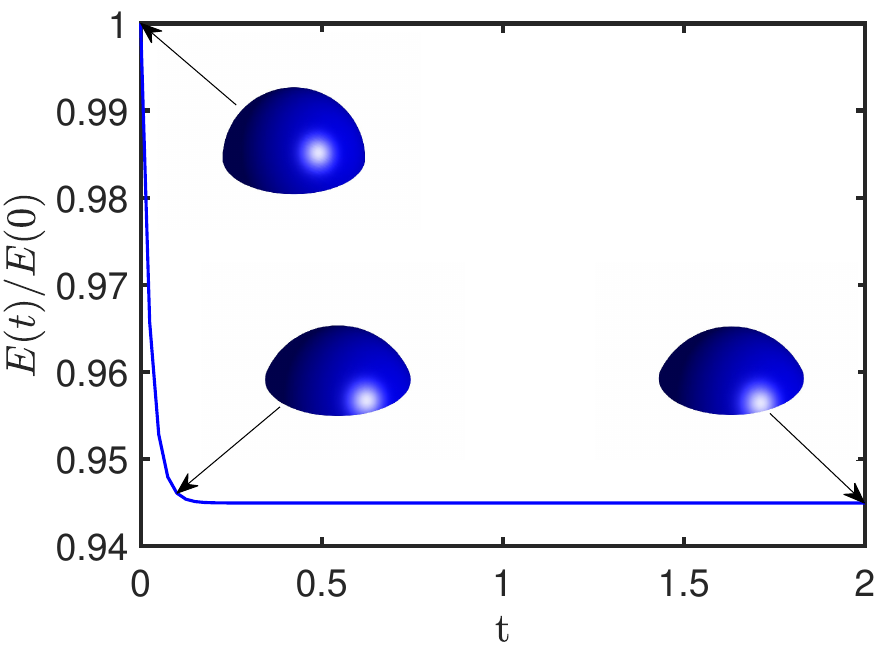}\qquad
\includegraphics[width=0.3\textwidth]{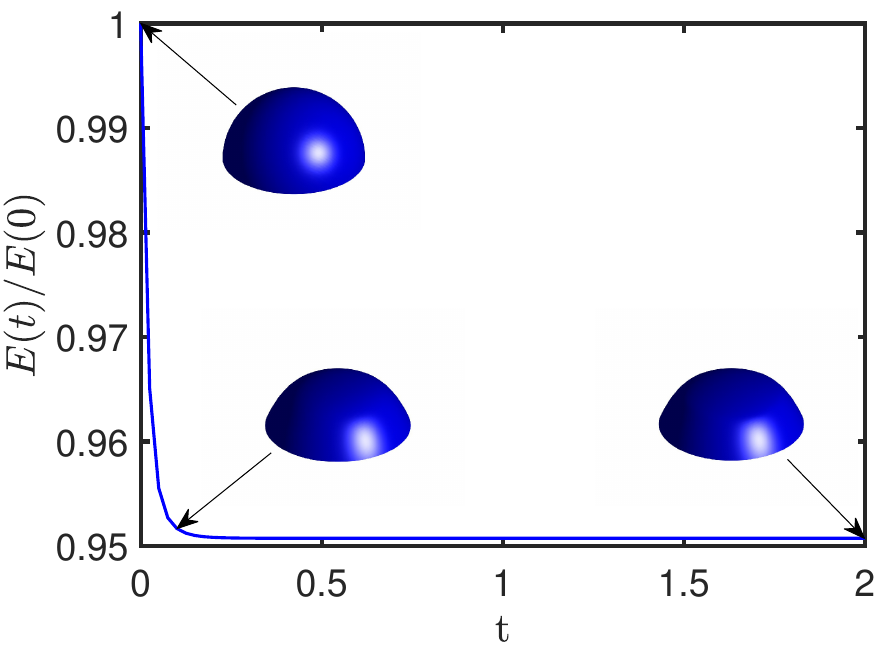}\qquad
\includegraphics[width=0.3\textwidth]{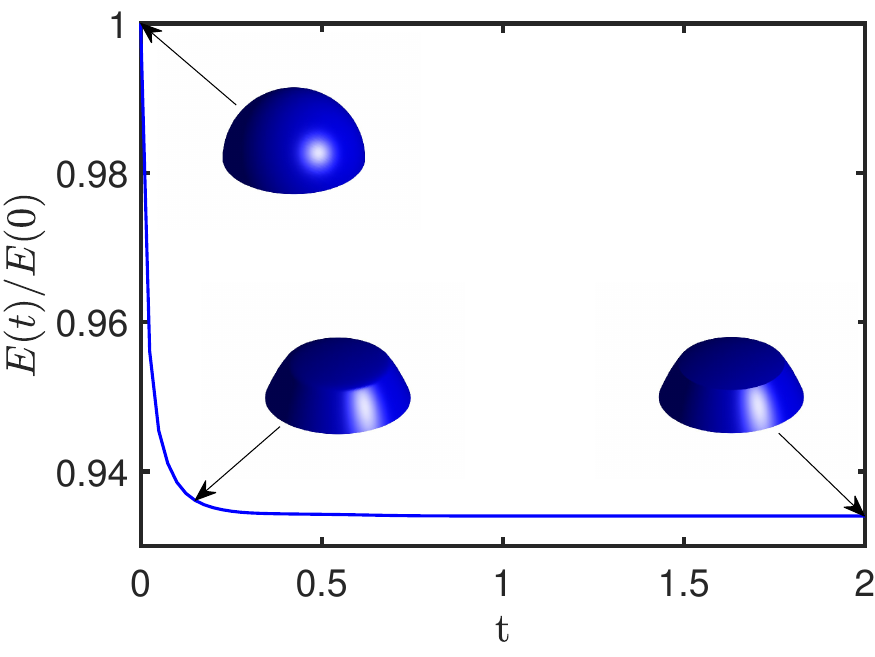}
\caption{
The time evolution of the relative the energy ratio $E(t)/E(0)$ and the axisymmetric surfaces of several special moments
 for the case of 5-fold anisotropy: $\gamma(\theta)=1+\beta\cos(5\theta)$.
The parameters are selected by $h=1/80$, $\ttau=1/40$, $\sigma=-0.4$, $\eta=100$, $\beta=0,0.03,0.07$ in this test, and we choose the semi-ellipse rotation as the initial shape, and adopt the surface energy matrix $\mat{B}_1(\theta)$.
} 
\label{fig:exa4_fig1}
\end{figure}

\begin{figure}[!htp]
\centering
\includegraphics[width=0.6\textwidth]{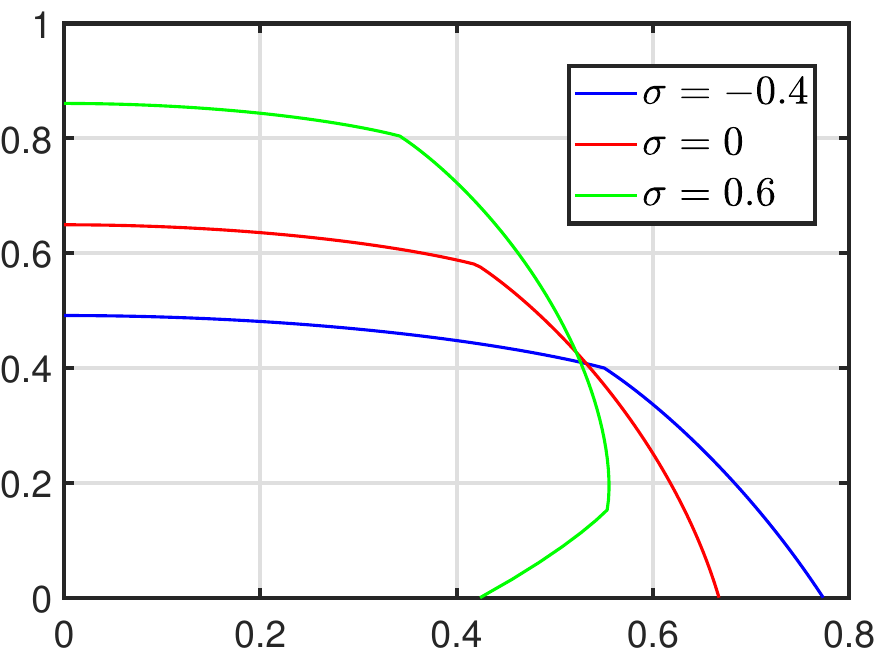}

\includegraphics[width=0.33\textwidth]{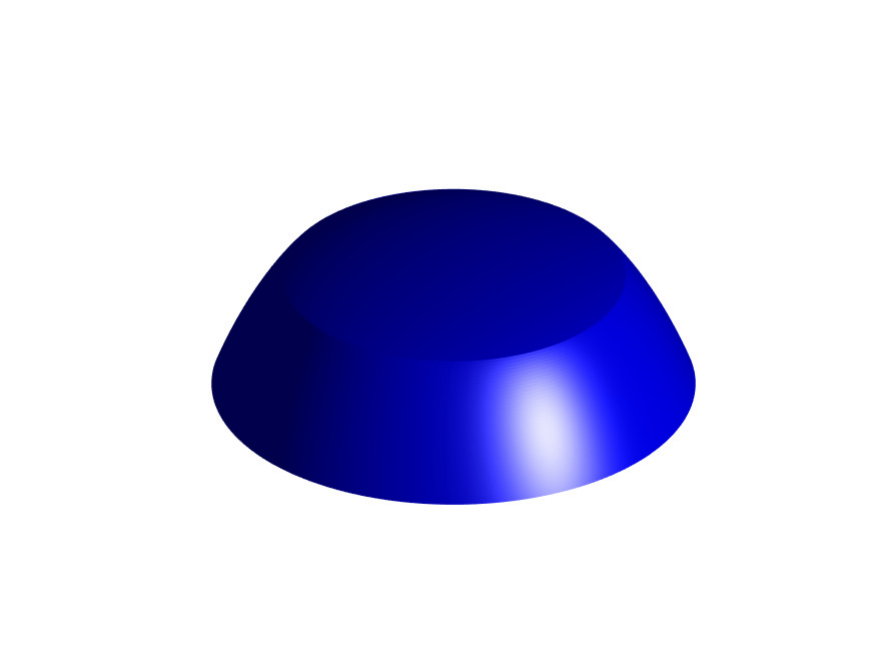}
\includegraphics[width=0.33\textwidth]{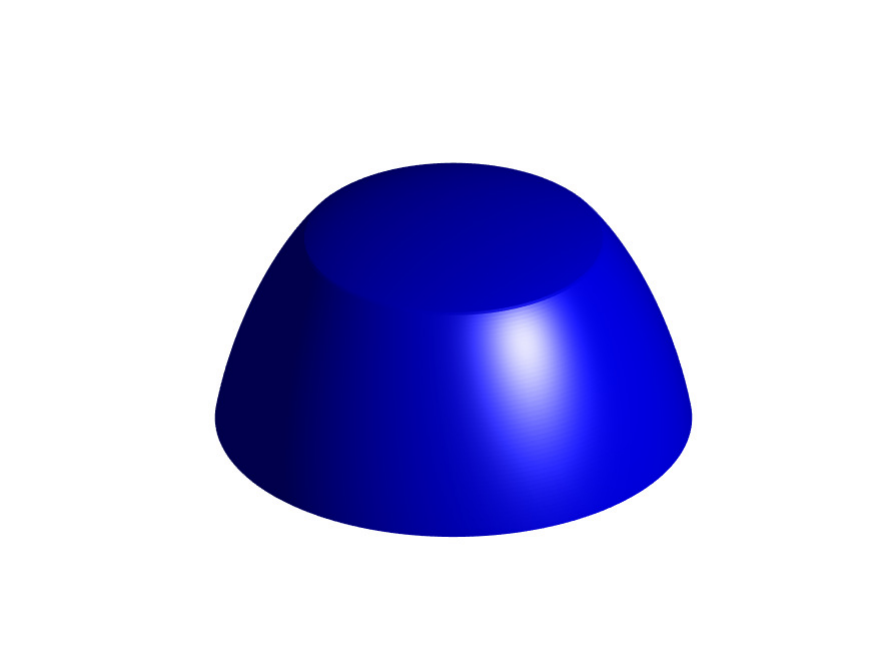}
\includegraphics[width=0.33\textwidth]{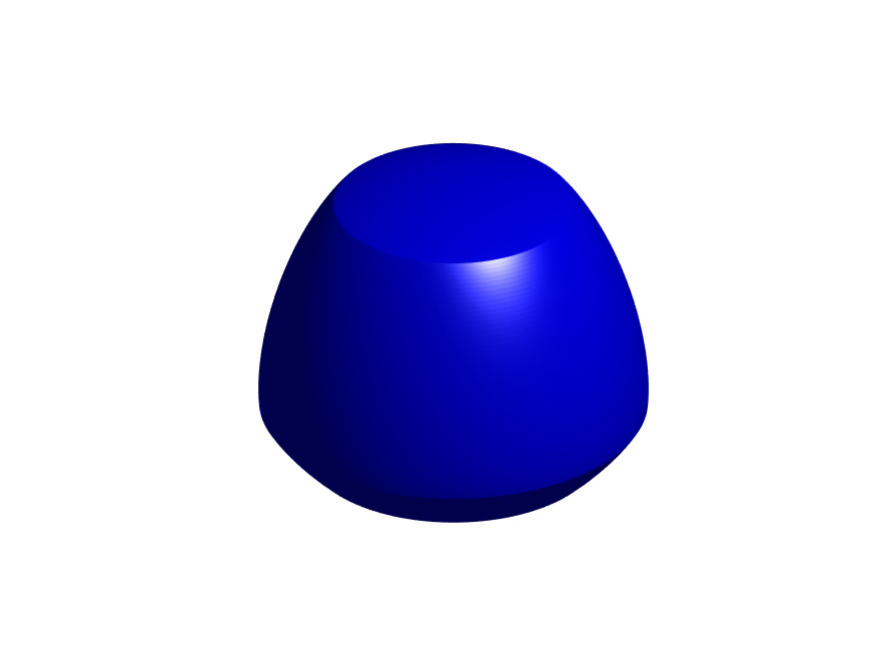}
\caption{The generating curves $\Gamma^m$ with $\sigma=-0.4,0,0.6$ in the state of equilibrium (upper pane), and the corresponding axisymmetric surfaces $S^m$ generated by $\Gamma^m$ (lower panel). Here 
$h=1/80$, $\ttau=1/40$, and we selct 5-fold anisotropy: $\gamma(\theta)=1+\beta\cos(5\theta)$.} 
\label{fig:exa5_fig1}
\end{figure}
\begin{figure}[!htp]
\centering
\hspace{0.5cm}
\includegraphics[width=0.45\textwidth]{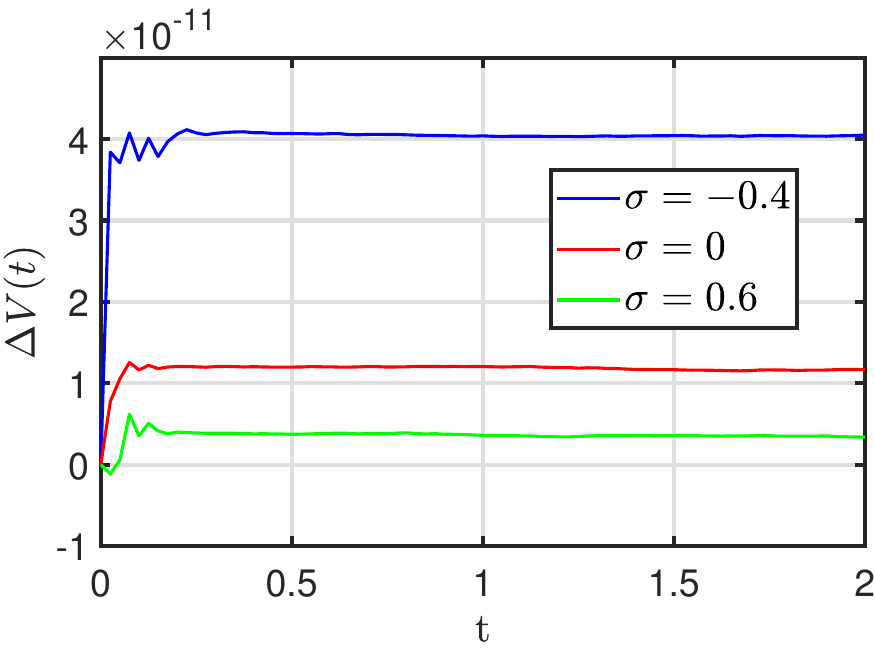}\hspace{0.5cm}
\includegraphics[width=0.45\textwidth]{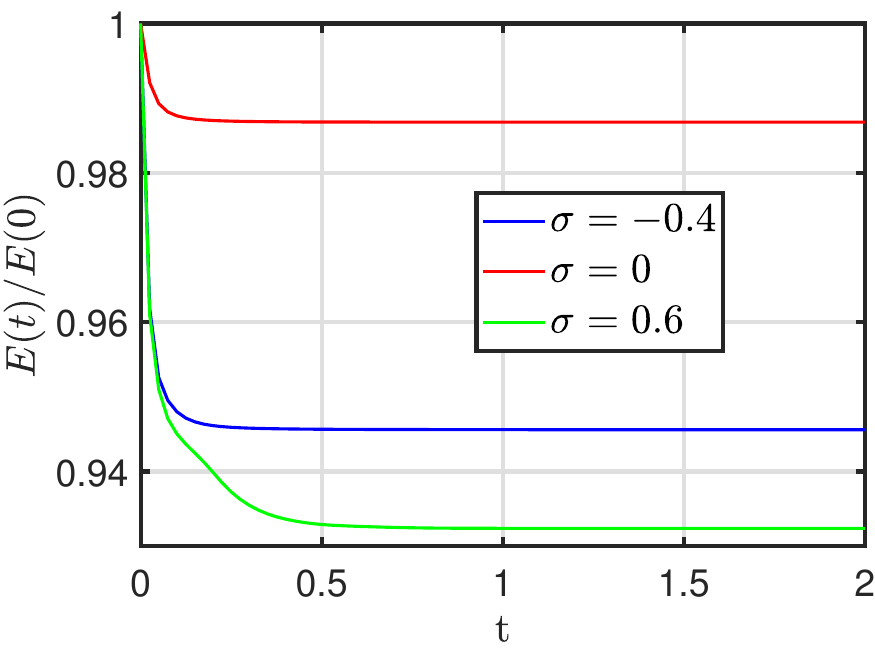}
\caption{The time evolution of the relative volume loss $\Delta V(t)$ and the energy ratio $E(t)/E(0)$ for the case of 5-fold anisotropy: $\gamma(\theta)=1+\beta\cos(5\theta)$, where $h=1/80$, $\ttau=1/40$, $\sigma=-0.4,0,0.6$.} 
\label{fig:exa5_fig2}
\end{figure}

\textbf{Example 5}:
In this concluding example, we focus on the intricate alterations that take place during the evolution of thin films. The anisotropy in this example is chosen by $\gamma(\theta)=1+\beta\cos(4\theta)$. We mainly do the following three tests:
\begin{itemize}
    \item We investigate the evolution of a thin film with initial torus. 
   As the time goes, the holes are minute enough to gradually vanish over time. Once the generating curve touches $z$-axis, by artificially updating the boundary conditions, it ultimately generates a closed pattern with distinct corners in equilibrium. Several specific moments in the evolution process are given in Figure \ref{fig:exa6_fig1}.
    \item We conduct an investigation on the progression of the elongated thin film. Our findings reveal that as time evolved, the thin film undergoes a pinch-off process, ultimately forms two separate films. The two separate films continue to evolve separately, and the one on the right eventually hits the $z$-axis, see Figures \ref{fig:exa6_fig2}-\ref{fig:exa6_fig22}.
    \item We finally study the evolution of a longer film. Unlike the previous example, the two films undergo separation and subsequently reunite, see Figures \ref{fig:exa6_fig3}-\ref{fig:exa6_fig33}. 
\end{itemize}

\begin{figure}[!htp]
\centering
\includegraphics[width=0.8\textwidth]{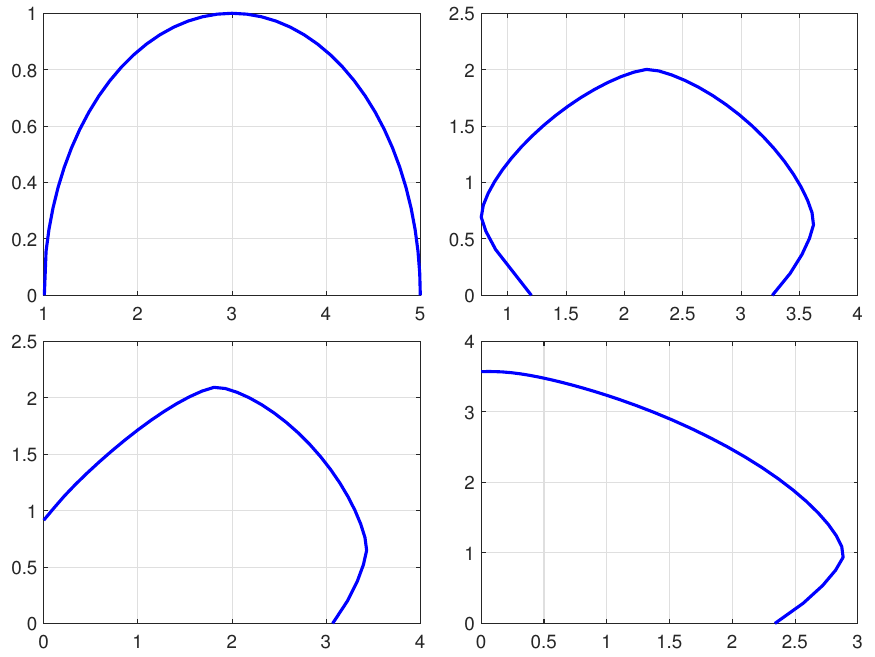}\qquad
\includegraphics[width=0.9\textwidth]{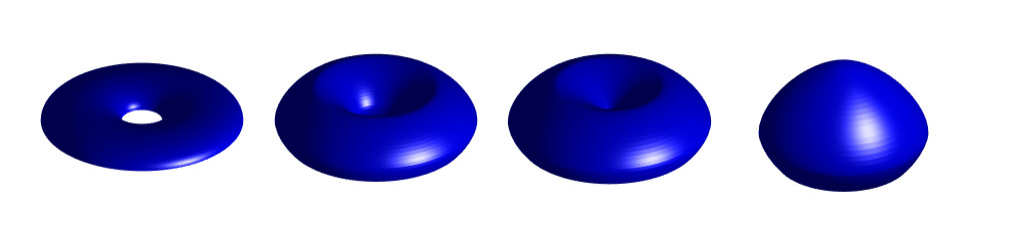}
\caption{ The generating curves $\Gamma^m$ at $t=0, 4.5, 5.5, 10$ (upper pane), and the corresponding axisymmetric surfaces $S^m$ generated by $\Gamma^m$ (lower panel). Here 
$h=1/40$, $\ttau=1/4$, $\sigma=0.6$.} 
\label{fig:exa6_fig1}
\end{figure}

\begin{figure}[!htp]
\centering
\includegraphics[width=0.8\textwidth]{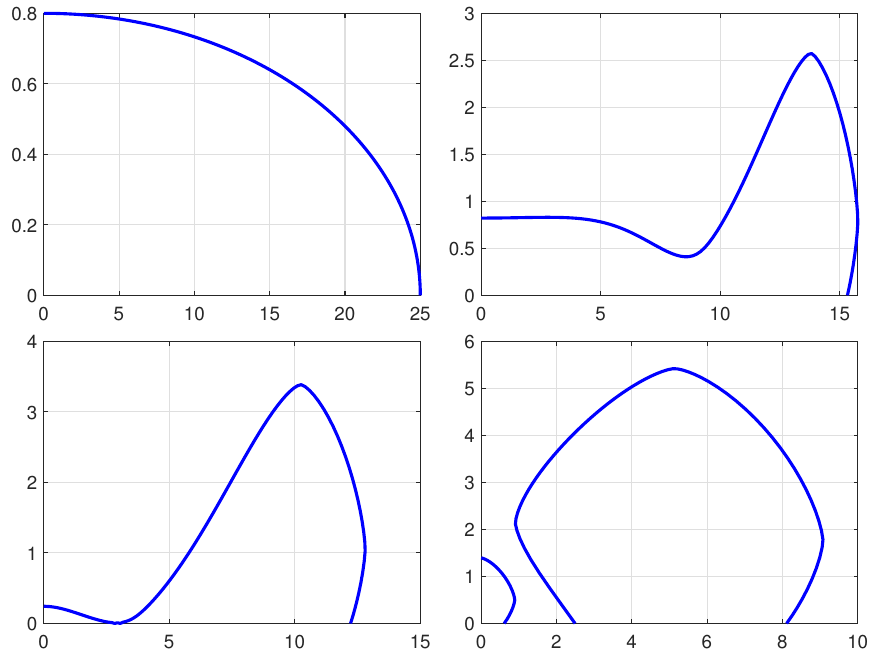}\qquad
\includegraphics[width=0.9\textwidth]{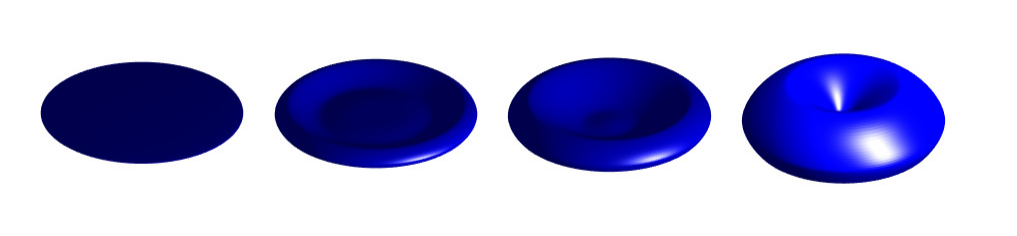}
\caption{ The generating curves $\Gamma^m$ at $t=0, 32, 86.4, 494.4$ (upper pane), and the corresponding axisymmetric surfaces $S^m$ generated by $\Gamma^m$ (lower panel). Here 
$h=1/100$, $\ttau=1.6$, $\sigma=0.6$.} 
\label{fig:exa6_fig2}
\end{figure}

\begin{figure}[!htp]
\centering
\includegraphics[width=0.45\textwidth]{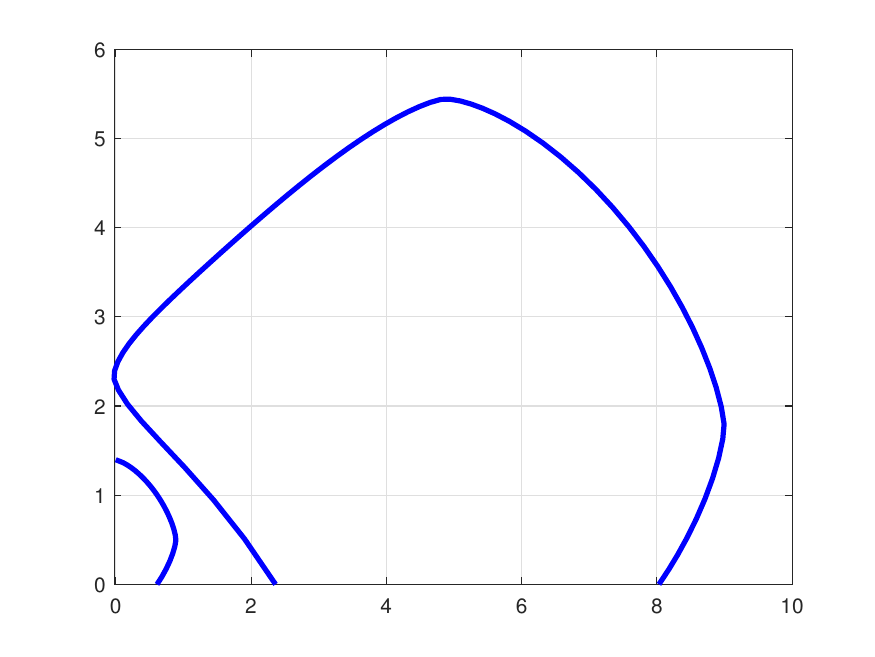}
\includegraphics[width=0.5\textwidth]{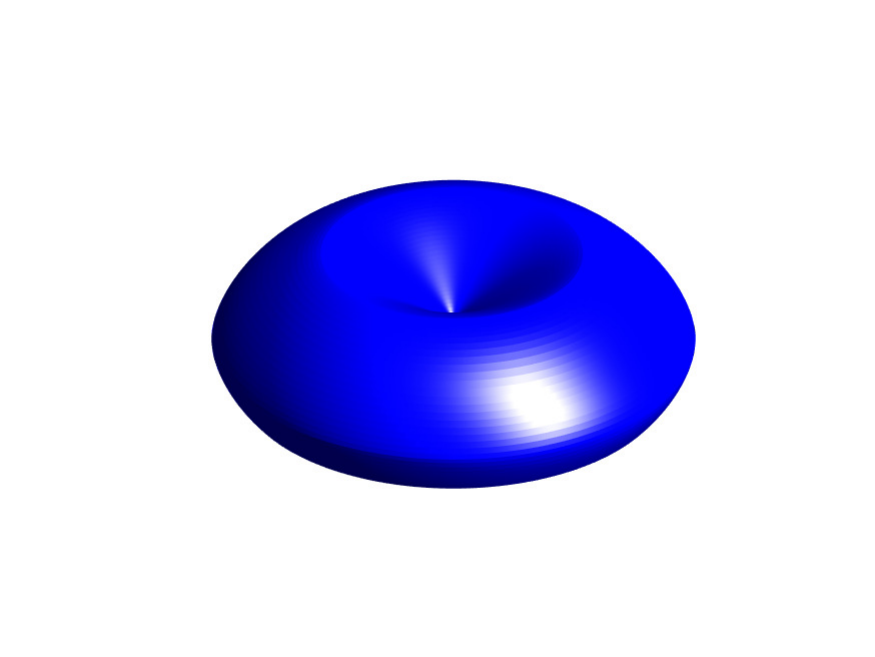}
\caption{ The generating curve $\Gamma^m$ eventually hits the $z$-axis (left panel), and the corresponding axisymmetric surfaces $S^m$ generated by $\Gamma^m$ are depicted (right panel). Here 
$h=1/100$, $\ttau=1.6$, $\sigma=0.6$.} 
\label{fig:exa6_fig22}
\end{figure}

\begin{figure}[!htp]
\centering
\includegraphics[width=0.8\textwidth]{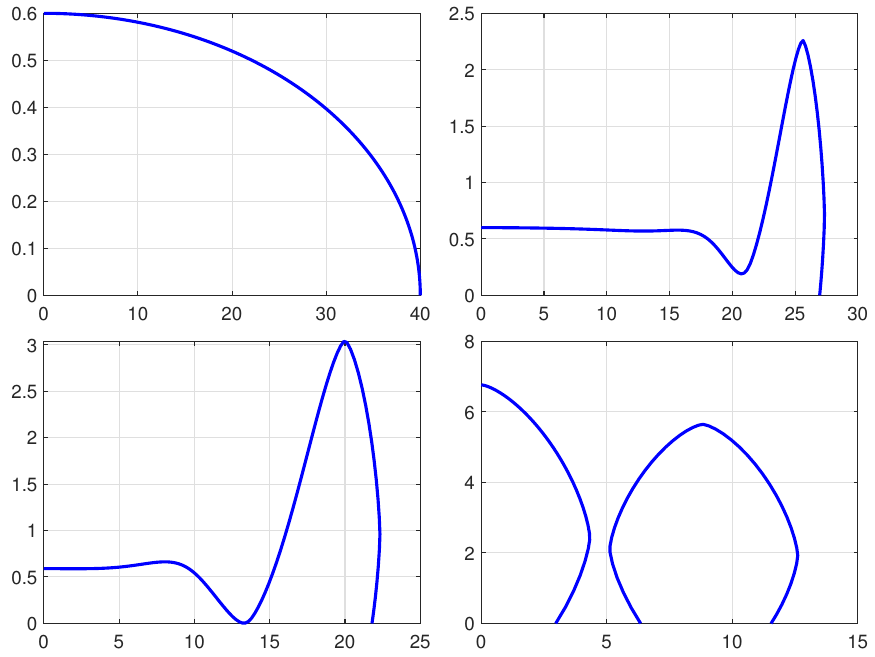}\qquad
\includegraphics[width=0.9\textwidth]{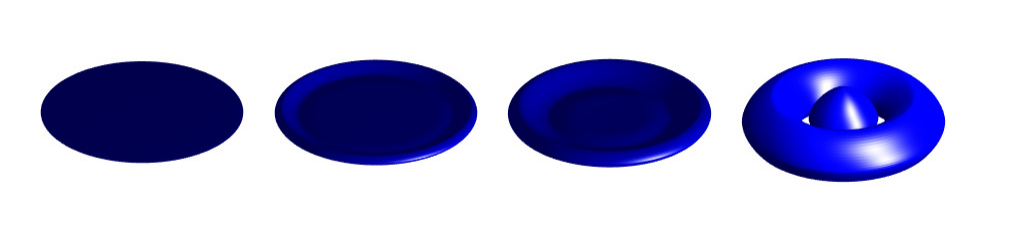}
\caption{ The generating curves $\Gamma^m$ at $t=0, 32, 163.8, 2000$ (upper pane), and the corresponding axisymmetric surfaces $S^m$ generated by $\Gamma^m$ (lower panel). Here 
$h=1/100$, $\ttau=1.6$, $\sigma=0.6$.} 
\label{fig:exa6_fig3}
\end{figure}

\begin{figure}[!htp]
\centering
\includegraphics[width=0.45\textwidth]{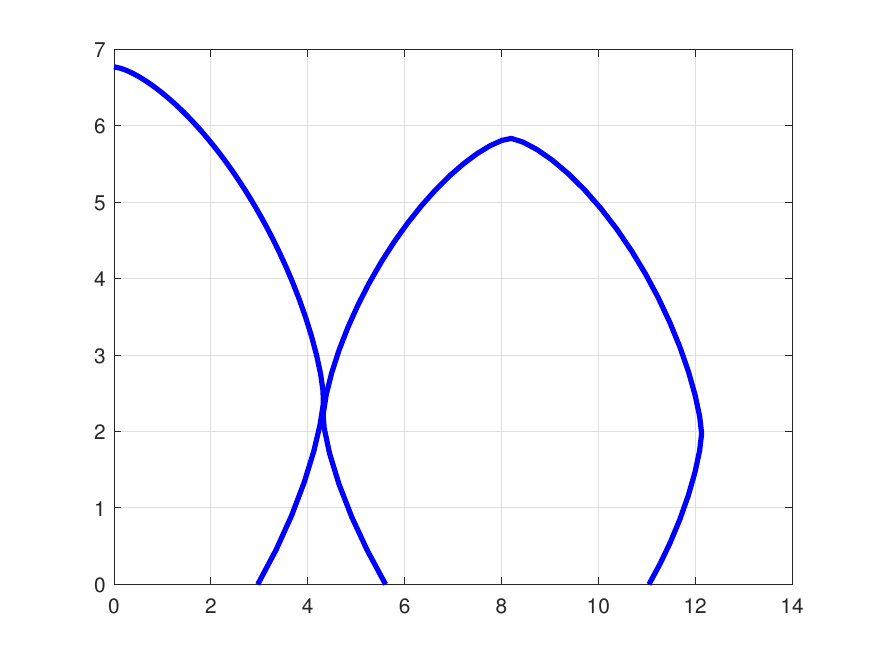}
\includegraphics[width=0.5\textwidth]{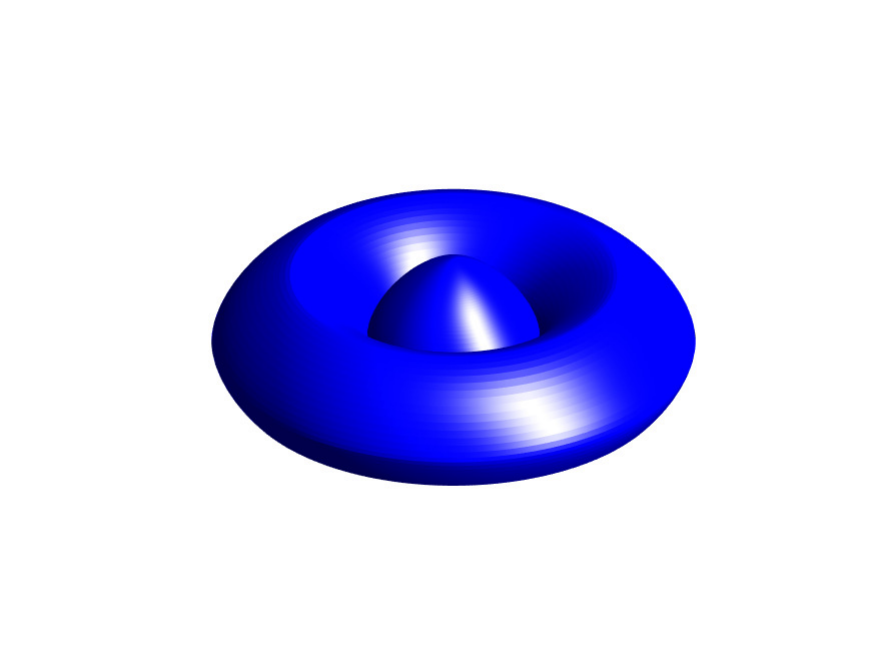}
\caption{ The separated curves eventually touch again, and the corresponding axisymmetric surfaces $S^m$ generated by $\Gamma^m$ are depicted (right panel). Here 
$h=1/100$, $\ttau=1.6$, $\sigma=0.6$.} 
\label{fig:exa6_fig33}
\end{figure}

\section{Conclusions}\label{sec7}
In this work, we focus on the efficient PFEMs for the axisymmetric SSD 
 with anisotropic surface energy. 
 Through the introduction of two types of surface energy matrices with respect to the orientation angle $\theta$, we develop two structure-preserving algorithms for the axisymmetric SSD, which exhibit applicability across a wider range of anisotropy functions and are theoretically proven to uphold volume conservation and energy stability.
Moreover, leveraging a novel weak formulation for axisymmetric SSD, we construct another two numerical schemes with relatively good mesh quality. Through numerous numerical tests, we have showcased the accuracy, structure preservation, and efficiency of our numerical methods. 
\bibliographystyle{elsarticle-num}
\bibliography{thebib}
\end{document}